\begin{document}

\title{Numerical Approximation for Stochastic Nonlinear Fractional Diffusion Equation Driven by Rough Noise
	\thanks{This work was supported by National Natural Science Foundation of China under Grant No. 12071195, AI and Big Data Funds under Grant No. 2019620005000775, and Fundamental Research Funds for the Central Universities under Grant Nos. lzujbky-2021-it26 and lzujbky-2021-kb15.
	}
}

\titlerunning{Numerical Approximation for Stochastic Fractional Diffusion Equation}        

\author{Daxin Nie         \and
		Jing Sun\and
        Weihua Deng 
}


\institute{Daxin Nie \at
              School of Mathematics and Statistics, Gansu Key Laboratory of Applied Mathematics and Complex Systems, Lanzhou University, Lanzhou 730000, P.R. China \\
              \email{ndx1993@163.com}           
           \and
           Jing Sun \at
           School of Mathematics and Statistics, Gansu Key Laboratory of Applied Mathematics and Complex Systems, Lanzhou University, Lanzhou 730000, P.R. China \\
           \email{js@lzu.edu.cn}           
           \and
           Weihua Deng \at
              School of Mathematics and Statistics, Gansu Key Laboratory of Applied Mathematics and Complex Systems, Lanzhou University, Lanzhou 730000, P.R. China\\
              \email{dengwh@lzu.edu.cn}
}

\date{Received: date / Accepted: date}

\maketitle

\begin{abstract}
In this work, we are interested in building the fully discrete scheme for stochastic fractional diffusion equation driven by fractional Brownian sheet which is temporally and spatially fractional with Hurst parameters $H_{1}, H_{2} \in(0,\frac{1}{2}]$. We first provide the regularity of the solution. Then we employ the Wong-Zakai approximation to regularize the rough noise and discuss the convergence of the approximation. Next, the finite element and backward Euler convolution quadrature methods are used to discretize  spatial and temporal operators for the obtained regularized equation, and the detailed error analyses are developed.  Finally, some numerical examples are presented to confirm the theory.
\keywords{stochastic fractional diffusion equation \and fractional Brownian sheet \and Wong-Zakai approximation \and finite element method \and    convolution quadrature\and  error analysis}
\end{abstract}

\section{Introduction}
The Brownian motion subordinated by inverse $\alpha$-stable L\'evy process is a powerful model for describing the subdiffusion phenomena \cite{Deng.2020FDE}. 
In this paper, we are concerned with the Fokker-Planck equation (governing the probability density function of the subordinated Brownian motion) with nonlinear source term and external noises, i.e., we present and analyze the fully discrete scheme for the following stochastic nonlinear fractional diffusion equation driven by fractional Brownian sheet noise
\begin{equation}\label{eqretosol}
	\left\{\begin{aligned}
		&\partial_{t}u(x,t)+{}_{0}\partial_{t}^{1-\alpha}A u(x,t)=f(u)+\beta\xi^{H_{1},H_{2}}(x,t)\qquad (x,t)\in{D}\times (0,T],\\
		&u(x,0)=0\qquad x\in{D},\\
		&u(x,t)=0\qquad (x,t)\in\partial{D}\times (0,T],
	\end{aligned}\right.
\end{equation}
where $A=-\Delta$ with a zero Dirichlet boundary condition; ${}_{0}\partial^{1-\alpha}_{t}$ with $\alpha\in(0,1)$ is the Riemann-Liouville fractional derivative defined by \cite{Podlubny.1999FDE}
\begin{equation*}
	{}_{0}\partial^{1-\alpha}_{t}u=\frac{1}{\Gamma(\alpha)}\frac{\partial}{\partial t}\int_{0}^{t}(t-\xi)^{\alpha-1}u(\xi)d\xi;
\end{equation*}
$f(u)$ is a nonlinear term satisfying the following assumptions
\begin{equation}\label{eqnonassump}
	\begin{aligned}
		&\|f(u)\|_{L^{2}(D)}\leq C(1+\|u\|_{L^{2}(D)}),\\
		&\|f(u)-f(v)\|_{L^{2}(D)}\leq C\|u-v\|_{L^{2}(D)}
	\end{aligned}
\end{equation}
with $C$ being a positive constant; $\beta$ is a non-zero constant, and without loss of generality, we take $\beta=1$ in our analyses; $\xi^{H_{1},H_{2}}$ is defined by
\begin{equation}\label{eqdefxi}
	\xi^{H_{1},H_{2}}(x,t)=\frac{\partial^{2}W^{H_{1},H_{2}}(x,t)}{\partial x\partial t}
\end{equation}
with $W^{H_{1},H_{2}}(x,t)$ being a fractional Brownian sheet on a stochastic basis $(\Omega,\mathcal{F},(\mathcal{F}_{t})_{t\in[0,T]},\mathbb{P})$ such that
\begin{equation*}
	\begin{aligned}
		&\mathbb{E}\left[W^{H_{1},H_{2}}(x,t)W^{H_{1},H_{2}}(y,s)\right]
\\
		&
=\frac{x^{2H_{1}}+y^{2H_{1}}-|x-y|^{2H_{1}}}{2}\times \frac{t^{2H_{2}}+s^{2H_{2}}-|t-s|^{2H_{2}}}{2},\\
		& {\rm where}~ (x,t),(y,s)\in {D}\times[0,T].
	\end{aligned}
\end{equation*}
Here $D=(0,l)$ with $l$ being a bounded constant  and  $H_{1},~H_{2}\in(0,\frac{1}{2}]$ are Hurst parameters.

As we all know, fractional noises exist  widely in the natural world, such as flows in porous media, the rough Hamiltonian systems, water flows in hydrology and so on \cite{Biagini.2008SCfFBMaA,Comegna.2013UoafBmmtmshvospahpdapv}. In recent years, there have been many discussions about numerically solving stochastic partial differential equations driven by fractional Brownian motion with Hurst parameter $H\in(\frac{1}{2},1)$ (it can be called as ``smoother noise'') \cite{Arezoomandan.2021ScmfspdewfBm,Nie.2020ScoftsofdedbfGn,Wang.2017SmrrfSwfnaocrftna,Yan.2019OeeffspdewfBm}. But for the case $H\in(0,\frac{1}{2})$ (called as ``rough noise''), the existing discussions seem to be few. In \cite{Cao.2018FeafssdedbfBm}, the authors propose numerical analyses about the second-order stochastic differential equation driven by spatial fractional Gaussian noise with $H\in(0,\frac{1}{2})$; the reference \cite{Nie.2021AucaftfdedbfGnwHi$} uses the equivalence of different fractional Sobolev spaces and the assumption $\tau<\tau^{*}$ ($\tau^{*}$ depends on the spatial discretization) to provide a unified strong convergence analysis for fractional stochastic partial differential equation driven by fractional cylinder noise with $H\in(0,1)$; in \cite{Cao.2017Aseewawarn}, the authors propose the regularity estimates and the corresponding numerical analyses about the stochastic evolution equation  driven by fractional Brownian sheet with $H_{1}\in(0,\frac{1}{2})$ and $H_{2}=\frac{1}{2}$.

In this paper, we focus on the fractional diffusion equation driven by fractional Brownian sheet with Hurst parameters $H_{1},H_{2}\in(0,\frac{1}{2}]$ numerically, which are both rough in the temporal and spatial directions. Firstly, with the help of the obtained new estimate about stochastic integral with respect to $\xi^{H_{1},H_{2}}$ (for the details, see Theorem \ref{thmisometry}), we provide the regularity estimate of the solution, i.e.,
\begin{equation*}
	\begin{aligned}
		\mathbb{E}\|A^{\sigma}u(t)\|_{L^{2}(D)}^{2}\leq C,\quad 2\sigma\in \left[0,\min\left\{\frac{2H_{2}}{\alpha}+H_{1}-1,2H_{1}+1\right\}\right),
	\end{aligned}
\end{equation*}
and
\begin{equation*}
	\mathbb{E}\left \|\frac{u(t)-u(t-\tau)}{\tau^{\gamma}}\right \|_{L^{2}(D)}^{2}\leq C,\qquad \gamma\in[0,2H_{2}+(H_{1}-1)\alpha).
\end{equation*}
Then the Wong-Zakai approximation \cite{EugeneWong.1965OtCoOItSI,Eugene.1965Otrboasde} is used to regularize the fractional Brownian sheet noise $\xi^{H_{1},H_{2}}$. The existing discussions \cite{Cao.2017Aseewawarn,Cao.2018FeafssdedbfBm,Liu.2019WaosAe} rely on the Green function of Eq. \eqref{eqretosol} composed of Mittag-Leffler function \cite{Sakamoto.2011Ivvpffdeaatsip}, making the convergence analysis of the Wong-Zakai approximation complicated. So, in this paper, a new approach based on approximation theory, operator theory, and the equivalence of different Sobolev spaces is built and an $\mathcal{O}(h^{\frac{4H_{2}}{\alpha}+2H_{1}-2-\epsilon}+\tau^{2H_{2}-\frac{\alpha}{2}-\epsilon}h^{2H_{1}-1})$ convergence rate of Wong-Zakai approximation is obtained. Next, we use the finite element method and backward Euler convolution quadrature method to build the fully discrete scheme of Eq. \eqref{eqretosol} and introduce some new techniques to obtain an $\mathcal{O}(\tau^{2H_{2}+(H_{1}-1)\alpha-\epsilon})$ convergence rates in time  without assuming $\tau<\tau^{*}$.

The rest of this paper is organized as follows. In Section \ref{sec2}, we first provide some properties about stochastic integral with respect to $\xi^{H_{1},H_{2}}$, and then discuss the regularity of the solution. Next, we consider the Wong-Zakai approximation of Eq. \eqref{eqretosol} and discuss its convergence in Section \ref{sec3}. In Section \ref{sec4}, we construct the numerical scheme by finite element method and backward Euler convolution quadrature method, and provide the complete error estimates. In Section \ref{sec5}, a variety of numerical experiments are provided to verify the proposed theoretical results. At last, we conclude the paper with some discussions. Throughout the paper, $C$ denotes a positive constant, whose value may vary from line to line, $\|\cdot\|$ denotes the operator norms from $L^{2}(D)$ to $L^{2}(D)$,  $\epsilon,\epsilon_{0}>0$ are arbitrarily small quantities, and $\mathbb{E}$ denotes the expectation.
\section{Preliminaries}\label{sec2}
In this section, we begin by discussing the properties of stochastic integrals with respect to fractional Brownian sheet noise. Also, the regularity of the solution is provided.

\subsection{Some properties of stochastic integrals with respect to  fractional Brownian sheet noise}
Let $\{\lambda_{k}\}_{k=1}^{\infty}$ and $\{\phi_{k}\}_{k=1}^{\infty}$ be the non-decreasing eigenvalues and $L_{2}$-norm normalized eigenfunctions of operator $A$ with a zero Dirichlet boundary condition, respectively.  According to \cite{Laptev.1997DaNepodiEs,Li.1983OtSeatep}, the eigenvalues  $\{\lambda_{k}\}_{k=1}^{\infty}$ satisfy the following lemma.
\begin{lemma}[\cite{Laptev.1997DaNepodiEs,Li.1983OtSeatep}]\label{thmeigenvalue}
	Let $D$ be a bounded domain in $\mathbb{R}^d \,(d=1,2,3)$ and $\lambda_k$ the $k$-th eigenvalue of the Dirichlet boundary problem for the Laplace operator $A=-\Delta$ in $D$. Then, for all $k\geq 1$,
	\begin{equation*}
		\lambda_{k} \geq \frac{C_{d} d}{d+2} k^{2 / d}|D|^{-2 / d},
	\end{equation*}
	where $C_{d}=(2 \pi)^{2} B_{d}^{-2 / d}$, $|D|$ is the volume of $D$, and $B_d$ means the volume of the unit $d$-dimensional ball.
\end{lemma}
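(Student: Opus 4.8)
The plan is to deduce the stated bound on the single eigenvalue $\lambda_k$ from the sharper Berezin--Li--Yau estimate on the \emph{sum} of the first $k$ eigenvalues. This reduction is immediate: since $\{\lambda_j\}$ is non-decreasing, $\lambda_j\le \lambda_k$ for every $j\le k$, so $\sum_{j=1}^k\lambda_j\le k\lambda_k$, i.e. $\lambda_k\ge \tfrac1k\sum_{j=1}^k\lambda_j$. Hence it suffices to prove
\[
	\sum_{j=1}^{k}\lambda_{j}\ge \frac{d}{d+2}\,C_{d}\,k^{1+2/d}\,|D|^{-2/d},
\]
after which dividing by $k$ yields $\lambda_k\ge \frac{C_d d}{d+2}k^{2/d}|D|^{-2/d}$, the claim. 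All the real content sits in this sum bound, which I would establish by the classical Fourier-analytic argument.

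To that end I would extend each $L^2$-normalized eigenfunction $\phi_j$ by zero outside $D$, pass to the Fourier transform $\widehat{\phi_j}$ (with the symmetric normalization $\widehat{f}(\xi)=(2\pi)^{-d/2}\int f(x)e^{-ix\cdot\xi}dx$), and set $F(\xi)=\sum_{j=1}^{k}|\widehat{\phi_j}(\xi)|^2$. Two identities drive the argument. By Plancherel and the normalization $\|\phi_j\|_{L^2(D)}=1$ one gets $\int_{\mathbb{R}^d}F(\xi)\,d\xi=k$, so $F$ has total mass $k$. Since $-\Delta\phi_j=\lambda_j\phi_j$ with $\phi_j\in H_0^1(D)$, integration by parts gives $\|\nabla\phi_j\|_{L^2(D)}^2=\lambda_j$, and Plancherel applied to $\nabla\phi_j$ then yields
\[
	\int_{\mathbb{R}^d}|\xi|^2 F(\xi)\,d\xi=\sum_{j=1}^{k}\|\nabla\phi_j\|_{L^2(D)}^2=\sum_{j=1}^{k}\lambda_j .
\]
Thus the quantity I must bound from below is exactly the weighted integral $\int|\xi|^2F$, subject to $\int F=k$.

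The crux is a uniform pointwise bound together with a rearrangement. Writing $\widehat{\phi_j}(\xi)$ as the $L^2(D)$ inner product of $\phi_j$ against the character $x\mapsto(2\pi)^{-d/2}e^{ix\cdot\xi}$ (restricted to $D$) and invoking Bessel's inequality for the orthonormal family $\{\phi_j\}$ gives the key estimate $F(\xi)\le M:=(2\pi)^{-d}|D|$. Now, to minimize $\int|\xi|^2F$ over all $F$ with $0\le F\le M$ and $\int F=k$, the bathtub principle dictates that the extremal profile puts $F\equiv M$ on the centered ball $\{|\xi|\le R\}$ whose mass equals $k$, i.e. $MB_dR^d=k$. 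Evaluating $\int_{|\xi|\le R}|\xi|^2\,d\xi=\tfrac{d}{d+2}B_dR^{d+2}$ and substituting $R^d=k/(MB_d)$ produces $\frac{d}{d+2}(MB_d)^{-2/d}k^{1+2/d}$; since $(MB_d)^{-2/d}=(2\pi)^2B_d^{-2/d}|D|^{-2/d}=C_d|D|^{-2/d}$, this is precisely the asserted sum bound. I expect the two genuinely delicate points to be (i) the Bessel-inequality step, where the zero-extension and the Fourier normalization must be tracked carefully so that the constant lands on exactly $(2\pi)^{-d}|D|$, and (ii) the justification of the bathtub (rearrangement) minimization; by contrast, the identities for $\int F$ and $\int|\xi|^2F$ and the final passage from the sum to $\lambda_k$ are routine.
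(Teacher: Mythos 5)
Your proposal is correct, and it is essentially the argument behind the paper's statement: the paper does not prove this lemma at all but simply cites Li--Yau and Laptev, and your Fourier-analytic derivation (Bessel bound $F(\xi)\le (2\pi)^{-d}|D|$, the two Plancherel identities, the bathtub minimization giving the sum bound, then $\lambda_k\ge \frac1k\sum_{j\le k}\lambda_j$) is precisely the classical Li--Yau proof from the cited reference, with the constants landing correctly on $C_d=(2\pi)^2B_d^{-2/d}$.
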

Then we present some fractional Sobolev spaces, which can refer to \cite{Acosta.2019Feaffep,Acosta.2017AfLeRosafea,Bonito.2019NaotifL,DiNezza.2012HgttfSs}.
Introduce the operator $A^{q}$ with $q\in[0,1]$ as
\begin{equation*}
	A^{q}u=\sum_{k=1}^{\infty}\lambda_{k}^{q/2}(u,\phi_{k})\phi_{k}
\end{equation*}
and define $\hat{H}^{2q}(D)=\mathbb{D}(A^{q})$ with norm $\|u\|_{\hat{H}^{2q}(D)}=\|A^{q}u\|_{L^{2}(D)}$. Here $\mathbb{D}(A^{q})$ denotes the domain of $A^{q}$. It is easy to verify
\begin{equation*}
	\hat{H}^{0}(D)=L^{2}(D),\quad \hat{H}^{1}(D)=H^{1}_{0}(D).	
\end{equation*}
For $s\in(0,1)$, we define the fractional Sobolev space $H^{s}(D)$ by
\begin{equation*}
	H^{s}(D)=\left\{u\in L^{2}(D):|u|_{H^{s}(D)}^{2}=\int_{D}\int_{D}\frac{(u(x)-u(y))^{2}}{|x-y|^{1+2s}}dxdy<\infty\right\}, 
\end{equation*}
and its norm can be written as $\|\cdot\|_{H^{s}(D)}=\|\cdot\|_{L^{2}(D)}+|\cdot|_{H^{s}(D)}$.
\begin{remark}\label{Remeq0}
	According to \cite{Ervin.2006Vfftsfade}, the semi-norm of $H^{s}(D)$ can also be defined by
	\begin{equation*}
		|u|_{H^{s}(D)}=\|{}_{0}\partial^{s}_{x}u\|_{L^{2}(D)},
	\end{equation*}
	and combining the fractional Poinc\'are inequality \cite{DiNezza.2012HgttfSs,Ervin.2006Vfftsfade}, we can also define the norm of $H^{s}(D)$ by
	\begin{equation*}
		\|u\|_{H^{s}(D)}=\|{}_{0}\partial^{s}_{x}u\|_{L^{2}(D)},
	\end{equation*}
	where ${}_{0}\partial^{s}_{x}u$ is the Riemann-Liouville fractional derivative.
\end{remark}

Moreover, for $s\in(0,1)$, another type of fractional Sobolev space \cite{Acosta.2019Feaffep,Acosta.2017AfLeRosafea,Bonito.2019NaotifL} that we will use can be defined by
\begin{equation*}
	H^{s}_{0}(D)=\{u\in H^{s}(\mathbb{R}),~~u=0~~{\rm in}~~D^{c}\}
\end{equation*}
with the norm
\begin{equation*}
	\begin{aligned}
		\|u\|_{{H}^{s}_{0}(D)}^{2}=&\|u\|_{L^{2}(D)}^{2}+|u|_{{H}^{s}_{0}(D)}^{2}\\
		=&\|u\|_{L^{2}(D)}^{2}+\int_{\mathbb{R}}\int_{\mathbb{R}}\frac{(u(x)-u(y))^{2}}{|x-y|^{1+2s}}dxdy.
	\end{aligned}
\end{equation*}
\begin{remark}\label{Remeq}
	It is well-known that $H^{s}(D)={H}^{s}_{0}(D)$ for $s\in[0,\frac{1}{2})$; see \cite{Acosta.2019Feaffep,Bonito.2019NaotifL}. From \cite{Bonito.2019NaotifL}, we have $\hat{H}^{s}(D)={H}^{s}_{0}(D)$ for $s\in[0,\frac{3}{2})$.
\end{remark}
For fractional Brownian sheet noise, we have the following It\^{o} isometry
\begin{lemma}[\cite{Bardina.2006MfiwHplt$}]\label{thmisometry0}
	Let $g_{1}(x,t)=g_{1,1}(x)g_{1,2}(t)$ and $g_{2}(x,t)=g_{2,1}(x)g_{2,2}(t)$ satisfying $g_{1,1}(x),g_{2,1}(x)\in H^{\frac{1-2H_{1}}{2}}_{0}(D)$ and $g_{1,2}(t),g_{2,2}(t)\in H^{\frac{1-2H_{2}}{2}}_{0}((0,T))$. Then we have
	\begin{equation*}
		\begin{aligned}
			&\mathbb{E}\left (\int_{0}^{T}\int_{{D}} g_{1}(x,t)\xi^{H_{1},H_{2}}(dx,dt)\int_{0}^{T}\int_{{D}} g_{2}(x,t)\xi^{H_{1},H_{2}}(dx,dt)\right )\\
			&\qquad\qquad\qquad\qquad\qquad\qquad= (\mathcal{L}_{H_{2},t}g_{1,2}(t),g_{2,2}(t))_{\mathbb{R}}(\mathcal{L}_{H_{1},x}g_{1,1}(x),g_{2,1}(x))_{\mathbb{R}},
		\end{aligned}
	\end{equation*}
	where
	\begin{equation*}
		\mathcal{L}_{H_{1},x}u(x)=\left \{\begin{aligned}
			&2C_{H_{1}}\int_{\mathbb{R}}\frac{u(x)-u(y)}{|x-y|^{2-2H_{1}}}dy \qquad H_{1}\in \left(0,\frac{1}{2} \right),\\
			&u(x) \qquad\qquad\qquad\qquad\qquad\, H_{1}=\frac{1}{2},
		\end{aligned}\right .
	\end{equation*}
	and
	\begin{equation*}
		\mathcal{L}_{H_{2},t}u(t)=\left \{\begin{aligned}
			&2C_{H_{2}}\int_{\mathbb{R}}\frac{u(t)-u(r)}{|t-r|^{2-2H_{2}}}dr \qquad H_{2}\in(0,\frac{1}{2}),\\
			&u(t) \qquad\qquad\qquad\qquad\qquad\, H_{2}=\frac{1}{2}.
		\end{aligned}\right .
	\end{equation*}
	Here $C_{H_{i}}=\frac{1}{2}H_{i}(1-2H_{i})$, $i=1,2$.
\end{lemma}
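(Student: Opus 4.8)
The plan is to reduce the two-dimensional isometry to a pair of one-dimensional identities via the tensor-product structure of the covariance, and then to establish each one-dimensional identity through the fractional-Laplacian realization of $\mathcal{L}_{H_i}$. First I would treat the case where $g_{1,1},g_{1,2},g_{2,1},g_{2,2}$ are step functions, so that both stochastic integrals are honest Riemann sums of rectangular increments of $W^{H_1,H_2}$. Since the sheet covariance factorizes as $\mathbb{E}[W^{H_1,H_2}(x,t)W^{H_1,H_2}(y,s)]=R_{H_1}(x,y)R_{H_2}(t,s)$ with $R_H(x,y)=\frac{1}{2}(x^{2H}+y^{2H}-|x-y|^{2H})$, the expectation of the product of the two Riemann sums splits into the product of a purely spatial double sum and a purely temporal double sum. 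This is where the separable hypothesis $g_i=g_{i,1}g_{i,2}$ pays off, and it amounts to bookkeeping with the increment formula $\mathbb{E}[(B^H_b-B^H_a)(B^H_d-B^H_c)]=R_H(b,d)-R_H(b,c)-R_H(a,d)+R_H(a,c)$ in each variable.

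Next I would identify each one-dimensional sum with the bilinear form $(\mathcal{L}_{H,\cdot}\,\cdot\,,\cdot)_{\mathbb{R}}$. At the discrete level the spatial sum is a mixed second difference of $R_{H_1}$; in the continuum the formal mixed derivative $\partial_x\partial_y R_H(x,y)=-H(1-2H)|x-y|^{2H-2}=-2C_H|x-y|^{2H-2}$ is a non-integrable kernel at $x=y$ for $H<\tfrac12$. The cleanest route is the spectral representation: fractional Brownian motion with $H\in(0,\tfrac12)$ has spectral density proportional to $|\lambda|^{1-2H}$, so by Plancherel the increment form equals $c_H\int_{\mathbb{R}}|\lambda|^{1-2H}\widehat{g_{1,1}}(\lambda)\overline{\widehat{g_{2,1}}(\lambda)}\,d\lambda$. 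Since $|\lambda|^{1-2H}$ is exactly the Fourier symbol of the fractional Laplacian $(-\Delta)^{(1-2H)/2}$, and $\mathcal{L}_{H}$ is (up to the constant $C_H$) this operator written in its singular-integral form, Plancherel converts the spectral integral into $(\mathcal{L}_{H}g_{1,1},g_{2,1})_{\mathbb{R}}$, and likewise in the temporal variable. The degenerate endpoint $H=\tfrac12$ is the white-noise case: $B^{1/2}$ is standard Brownian motion, the classical Wiener isometry yields the plain $L^2$ inner product, and this matches $\mathcal{L}_{1/2}=\mathrm{Id}$.

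Finally I would remove the step-function restriction by density. By Remarks~\ref{Remeq0} and~\ref{Remeq}, the form $u\mapsto(\mathcal{L}_{H}u,u)_{\mathbb{R}}$ is comparable to the squared seminorm of $H^{(1-2H)/2}_0$, so both sides of the claimed identity are continuous on $H^{(1-2H_1)/2}_0(D)$ and $H^{(1-2H_2)/2}_0((0,T))$ respectively. Since step functions are dense in these spaces and the zero extension outside $D\times(0,T)$ (encoded by the $H^{\cdot}_0$ support condition) creates no boundary contributions, the identity passes to the limit for all admissible $g_{i,1},g_{i,2}$.

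The main obstacle is the middle step: turning the fractional-Brownian-motion increment covariance into the regularized operator $\mathcal{L}_H$ when $H<\tfrac12$. Because the naive kernel $|x-y|^{2H-2}$ fails to be locally integrable, one cannot simply write a double integral; one must either pass through the Hadamard finite part, equivalently symmetrize into the Gagliardo form $C_H\iint\frac{(u(x)-u(y))(v(x)-v(y))}{|x-y|^{2-2H}}\,dx\,dy$, or argue entirely on the Fourier side as above, and in either case justify the interchange of the step-function limit with the singular operator. Everything else, namely the tensorization and the $H=\tfrac12$ endpoint, is routine.
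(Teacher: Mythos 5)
The paper contains no proof of this lemma: it is imported verbatim from the cited reference (Bardina--Jolis 2006) and used as a black box, its only role being to feed Theorem \ref{thmisometry}, whose proof rewrites the right-hand side as the symmetrized Gagliardo double integral. So there is no internal proof to compare against; what can be assessed is whether your reconstruction is a sound proof of the cited result, and in outline it is --- your three steps (tensorization of the Riemann-sum covariance via the product structure of the sheet covariance, spectral identification of each one-dimensional increment form with the fractional-Laplacian bilinear form, extension by density of step functions in $H_0^{(1-2H_i)/2}$) are precisely the standard skeleton for results of this type. Two points would need tightening before this counts as a complete proof. First, the lemma asserts the specific constant $C_{H_i}=\frac{1}{2}H_i(1-2H_i)$, while the Fourier route as you state it only yields ``proportional to $|\lambda|^{1-2H}$''; the constant is most easily pinned down by your alternative route, namely computing the mixed second difference of $R_H$ for a pair of indicator functions and matching it against the explicit Gagliardo integral $C_H\iint\frac{(\chi_{(a,b]}(t)-\chi_{(a,b]}(s))(\chi_{(c,d]}(t)-\chi_{(c,d]}(s))}{|t-s|^{2-2H}}\,dt\,ds$. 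Second, the final density step is partly definitional rather than a limit to be justified: for non-step integrands the stochastic integral with respect to $\xi^{H_1,H_2}$ is \emph{defined} as the $L^2(\Omega)$ limit of step-function integrals, the isometry on step functions being what guarantees that the limit exists; granting that, continuity of both sides in the $H_0^{\cdot}$ norms (Cauchy--Schwarz on the Gagliardo form, consistent with Remarks \ref{Remeq0} and \ref{Remeq}) closes the argument exactly as you indicate.
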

Moreover, we can obtain
\begin{theorem}\label{thmisometry}
	Let $g_{1}(x,t)=g_{1,1}(x)g_{1,2}(t)$ and $g_{2}(x,t)=g_{2,1}(x)g_{2,2}(t)$ satisfying $g_{1,1}(x),g_{2,1}(x)\in H^{\frac{1-2H_{1}}{2}}_{0}(D)$ and $g_{1,2}(t),g_{2,2}(t)\in H^{\frac{1-2H_{2}}{2}}_{0}((0,T))$. Then we have
	\begin{equation*}
		\begin{aligned}
			&\mathbb{E}\left (\int_{0}^{T}\int_{{D}} g_{1}(x,t)\xi^{H_{1},H_{2}}(dx,dt)\int_{0}^{T}\int_{{D}} g_{2}(x,t)\xi^{H_{1},H_{2}}(dx,dt)\right )\\
			&\qquad\qquad\leq C\left \|{}_{0}\partial^{\frac{1-2H_{2}}{2}}_{t}g_{1,2}(t)\right \|_{L^{2}((0,T))}\left \|{}_{0}\partial^{\frac{1-2H_{2}}{2}}_{t}g_{2,2}(t)\right \|_{L^{2}((0,T))}\\
			&\qquad\qquad\qquad\cdot\|g_{1,1}(x)\|_{H^{\frac{1-2H_{1}}{2}}_{0}(D)}\|g_{2,1}(x)\|_{H^{\frac{1-2H_{1}}{2}}_{0}(D)}.
		\end{aligned}
	\end{equation*}	
Here ${}_{0}\partial^{\alpha}_{t}$ is the Riemann-Liouville fractional derivative when $\alpha\in(0,1)$; and when $\alpha=0$, it denotes an identity operator.
\end{theorem}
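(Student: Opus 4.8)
The plan is to start from the exact It\^o isometry in Lemma \ref{thmisometry0}, which expresses the left-hand side as a product of a purely temporal inner product and a purely spatial inner product, and then to bound each factor separately. Setting $s_{1}=\frac{1-2H_{1}}{2}$ and $s_{2}=\frac{1-2H_{2}}{2}$ (both lying in $[0,\frac{1}{2})$ since $H_{1},H_{2}\in(0,\frac{1}{2}]$), Lemma \ref{thmisometry0} yields
\begin{equation*}
	\mathbb{E}\left(\int_{0}^{T}\!\!\int_{D}g_{1}\,\xi^{H_{1},H_{2}}(dx,dt)\int_{0}^{T}\!\!\int_{D}g_{2}\,\xi^{H_{1},H_{2}}(dx,dt)\right)=(\mathcal{L}_{H_{2},t}g_{1,2},g_{2,2})_{\mathbb{R}}\,(\mathcal{L}_{H_{1},x}g_{1,1},g_{2,1})_{\mathbb{R}},
\end{equation*}
so it suffices to estimate the two factors.

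For the spatial factor I would first observe that, for $H_{1}\in(0,\frac{1}{2})$, the kernel $|x-y|^{-(2-2H_{1})}=|x-y|^{-(1+2s_{1})}$ of $\mathcal{L}_{H_{1},x}$ is precisely the Gagliardo kernel of exponent $s_{1}$. Symmetrizing in $x$ and $y$ (justified by Fubini, since $g_{1,1},g_{2,1}$ are supported in $D$ and lie in $H^{s_{1}}_{0}(D)$) turns the inner product into the symmetric bilinear form
\begin{equation*}
	(\mathcal{L}_{H_{1},x}g_{1,1},g_{2,1})_{\mathbb{R}}=C_{H_{1}}\int_{\mathbb{R}}\int_{\mathbb{R}}\frac{(g_{1,1}(x)-g_{1,1}(y))(g_{2,1}(x)-g_{2,1}(y))}{|x-y|^{1+2s_{1}}}\,dx\,dy.
\end{equation*}
Cauchy--Schwarz then bounds this by $C|g_{1,1}|_{H^{s_{1}}_{0}(D)}|g_{2,1}|_{H^{s_{1}}_{0}(D)}$, hence by $C\|g_{1,1}\|_{H^{s_{1}}_{0}(D)}\|g_{2,1}\|_{H^{s_{1}}_{0}(D)}$, which is exactly the spatial part of the claimed bound. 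The degenerate case $H_{1}=\frac{1}{2}$ is immediate: there $\mathcal{L}_{H_{1},x}$ is the identity, $s_{1}=0$, and the factor is $(g_{1,1},g_{2,1})_{L^{2}(D)}\le\|g_{1,1}\|_{L^{2}(D)}\|g_{2,1}\|_{L^{2}(D)}$, consistent with $H^{0}_{0}(D)=L^{2}(D)$.

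The temporal factor is handled by the same symmetrization and Cauchy--Schwarz, yielding $C|g_{1,2}|_{H^{s_{2}}_{0}((0,T))}|g_{2,2}|_{H^{s_{2}}_{0}((0,T))}$; it then remains only to replace the Gagliardo seminorm by the $L^{2}$ norm of the Riemann--Liouville derivative. Since $s_{2}\in[0,\frac{1}{2})$, Remark \ref{Remeq} gives $H^{s_{2}}_{0}((0,T))=H^{s_{2}}((0,T))$ with equivalent norms, and Remark \ref{Remeq0} identifies the $H^{s_{2}}$ seminorm with $\|{}_{0}\partial^{s_{2}}_{t}\cdot\|_{L^{2}((0,T))}$; for $H_{2}=\frac{1}{2}$ this is consistent with the stated convention that ${}_{0}\partial^{0}_{t}$ is the identity. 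Substituting this into the temporal factor and multiplying the two estimates produces the theorem (the running constant $C$ absorbing $C_{H_{1}},C_{H_{2}}$ and the norm-equivalence constants).

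The step I expect to require the most care is the conversion from the Gagliardo seminorm to the $L^{2}$ norm of the Riemann--Liouville derivative: this rests on the equivalences of fractional Sobolev norms recorded in Remarks \ref{Remeq0} and \ref{Remeq}, and one must check that the zero-extension used to place $g_{1,2},g_{2,2}$ in $H^{s_{2}}_{0}((0,T))$ is compatible with the left-sided Riemann--Liouville derivative and the restriction $s_{2}<\frac{1}{2}$. By contrast, the symmetrization and Fubini manipulations in both factors are routine once the support conditions are invoked, and the spatial factor needs no such conversion because the target norm is already the $H^{s_{1}}_{0}(D)$ norm produced directly by Cauchy--Schwarz.
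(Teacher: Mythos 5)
Your proposal is correct and follows essentially the same route as the paper: both start from the It\^o isometry of Lemma \ref{thmisometry0}, symmetrize each factor into the Gagliardo bilinear form, apply Cauchy--Schwarz, and then invoke Remarks \ref{Remeq0} and \ref{Remeq} to replace the temporal Gagliardo seminorm by the $L^{2}$ norm of the Riemann--Liouville derivative. Your explicit treatment of the degenerate cases $H_{i}=\frac{1}{2}$ merely spells out what the paper dismisses as ``obtained similarly.''
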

\begin{proof}
	Here, we mainly prove the case that $H_{1},H_{2}\in(0,\frac{1}{2})$. As for the other cases, the desired results can be obtained similarly.
	
According to Lemma \ref{thmisometry0} and the definition of $H^{s}_{0}(D)$, after simple calculations, we have
\begin{equation*}
		\begin{aligned}
			&\mathbb{E}\left (\int_{0}^{T}\int_{{D}} g_{1}(x,t)\xi^{H_{1},H_{2}}(dx,dt)\int_{0}^{T}\int_{{D}} g_{2}(x,t)\xi^{H_{1},H_{2}}(dx,dt)\right )\\
			=&C_{H_{1}}C_{H_{2}}\int_{\mathbb{R}}\int_{\mathbb{R}}\frac{(g_{1,1}(x)-g_{1,1}(y))(g_{2,1}(x)-g_{2,1}(y))}{|x-y|^{2-2H_{1}}}dxdy\\
			&\quad \cdot\int_{\mathbb{R}}\int_{\mathbb{R}}\frac{(g_{1,2}(t)-g_{1,2}(s))(g_{2,2}(t)-g_{2,2}(s))}{|t-s|^{2-2H_{2}}}dtds\\
			\leq&C\|g_{1,2}(t)\|_{H^{\frac{1-2H_{2}}{2}}_{0}((0,T))}\|g_{2,2}(t)\|_{H^{\frac{1-2H_{2}}{2}}_{0}((0,T))}\\
			&\quad\cdot\|g_{1,1}(x)\|_{H^{\frac{1-2H_{1}}{2}}_{0}(D)}\|g_{2,1}(x)\|_{H^{\frac{1-2H_{1}}{2}}_{0}(D)}.
		\end{aligned}
	\end{equation*}
	Combining further Remarks \ref{Remeq0} and \ref{Remeq}, one can arrive at the desired result.
\end{proof}

\subsection{Regularity of the solution}
Before building the regularity of the solution of Eq. \eqref{eqretosol}, we first give the presentation of the solution. Introduce $\mathcal{G}(t,x,y)$ as
\begin{equation}\label{eqdefG}
	\mathcal{G}(t,x,y)=\sum_{k=1}^{\infty}\mathcal{G}_{k}(t,x,y),
\end{equation}
where
\begin{equation}\label{eqdefGk}
	\mathcal{G}_{k}(t,x,y)=E_{k}(t)\phi_{k}(x)\phi_{k}(y)
\end{equation}
and
\begin{equation}\label{eqdefEk}
	E_{k}(t)=\frac{1}{2\pi\mathbf{i}}\int_{\Gamma_{\theta,\kappa}}e^{zt}z^{\alpha-1}(z^{\alpha}+\lambda_{k})^{-1}dz.
\end{equation}
Here $\Gamma_{\theta,\kappa}$ is defined by
\begin{equation*}
	\Gamma_{\theta,\kappa}=\{r e^{-\mathbf{i}\theta}: r\geq \kappa\}\cup\{\kappa e^{\mathbf{i}\psi}: |\psi|\leq \theta\}\cup\{r e^{\mathbf{i}\theta}: r\geq \kappa\},
\end{equation*}
where the circular arc is oriented counterclockwise and the two rays are oriented with an increasing imaginary part and $\mathbf{i}^2=-1$.

Thus the solution of Eq. \eqref{eqretosol} can be written as
\begin{equation}\label{eqsolpresentation0}
	u(x,t)=\int_{0}^{t}\int_{D}\mathcal{G}(t-s,x,y)f(u)dsdy+\int_{0}^{t}\int_{D}\mathcal{G}(t-s,x,y)\xi^{H_{1},H_{2}}(dy,ds).
\end{equation}
For the convenience of analyzing, we introduce the operator $\mathcal{R}$, which is defined by the Laplace transform, i.e.,
\begin{equation*}
	\tilde{\mathcal{R}}(z)=z^{\alpha-1}(z^{\alpha}+A)^{-1}.
\end{equation*}
It is easy to verify
\begin{equation*}
	\mathcal{G}_{k}(t,x,y)=\mathcal{R}(t)\phi_{k}(x)\phi_{k}(y)
\end{equation*}
and
\begin{equation*}
	\mathcal{R}(t)u(x)=\int_{D}\mathcal{G}(t,x,y)u(y)dy.
\end{equation*}
So the solution can also be written as
\begin{equation}\label{eqsolpresentation}
	u(t)=\int_{0}^{t}\mathcal{R}(t-s)f(u(s))ds+\int_{0}^{t}\int_{D}\mathcal{G}(t-s,x,y)\xi^{H_{1},H_{2}}(dy,ds).
\end{equation}

\begin{theorem}\label{thmsobo}
	Let $u(t)$ be the solution of Eq. \eqref{eqretosol} and $f(u)$ satisfy the assumptions \eqref{eqnonassump}. Assume $2H_{2}+(H_{1}-1)\alpha>0$. Then there holds
	\begin{equation*}
		\mathbb{E}\|A^{\sigma}u(t)\|_{L^{2}(D)}^{2}\leq C,
	\end{equation*}
	where $2\sigma\in[0,\min\{\frac{2H_{2}}{\alpha}+H_{1}-1,2H_{1}+1\})$.
\end{theorem}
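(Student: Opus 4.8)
The plan is to decompose the mild solution \eqref{eqsolpresentation} as $u=u_{1}+u_{2}$, where $u_{1}(t)=\int_{0}^{t}\mathcal{R}(t-s)f(u(s))\,ds$ is the deterministic Duhamel part and $u_{2}(t)=\int_{0}^{t}\int_{D}\mathcal{G}(t-s,x,y)\,\xi^{H_{1},H_{2}}(dy,ds)$ is the stochastic convolution, and to bound $\mathbb{E}\|A^{\sigma}u_{1}(t)\|_{L^{2}(D)}^{2}$ and $\mathbb{E}\|A^{\sigma}u_{2}(t)\|_{L^{2}(D)}^{2}$ separately. Since the bound on $u_{1}$ will feed back $\mathbb{E}\|u(s)\|_{L^{2}(D)}^{2}$ through the growth condition \eqref{eqnonassump}, I would first settle the case $\sigma=0$ and then bootstrap. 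Here the hypothesis $2H_{2}+(H_{1}-1)\alpha>0$ is exactly what guarantees that $0$ lies strictly inside the admissible range (indeed $\tfrac{2H_{2}}{\alpha}+H_{1}-1>0$ is equivalent to it), so that the $L^{2}$ estimate is available and the Gr\"onwall bootstrap can start.

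For the deterministic part the key is the smoothing property of $\mathcal{R}$. Using the resolvent bound $\|A^{\sigma}(z^{\alpha}+A)^{-1}\|\le C|z|^{-\alpha(1-\sigma)}$ on the contour $\Gamma_{\theta,\kappa}$ and inverting the Laplace transform of $\tilde{\mathcal{R}}(z)=z^{\alpha-1}(z^{\alpha}+A)^{-1}$ yields $\|A^{\sigma}\mathcal{R}(t)\|\le Ct^{-\alpha\sigma}$. Hence, by \eqref{eqnonassump},
\[
\|A^{\sigma}u_{1}(t)\|_{L^{2}(D)}\le C\int_{0}^{t}(t-s)^{-\alpha\sigma}\big(1+\|u(s)\|_{L^{2}(D)}\big)\,ds,
\]
and, after squaring, applying the Cauchy--Schwarz inequality in $s$ (the kernel is integrable since $\alpha\sigma<1$ throughout the stated range) and taking expectations, a Gr\"onwall argument together with the already-established bound $\mathbb{E}\|u(s)\|_{L^{2}(D)}^{2}\le C$ gives $\mathbb{E}\|A^{\sigma}u_{1}(t)\|_{L^{2}(D)}^{2}\le C$.

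The heart of the proof is the stochastic part, and this is where I expect the main obstacle. Expanding in the eigenbasis, $(u_{2}(t),\phi_{k})=\int_{0}^{t}\int_{D}E_{k}(t-s)\phi_{k}(y)\,\xi^{H_{1},H_{2}}(dy,ds)$ with $E_{k}$ from \eqref{eqdefEk}, so orthonormality gives $\mathbb{E}\|A^{\sigma}u_{2}(t)\|_{L^{2}(D)}^{2}=\sum_{k}\lambda_{k}^{2\sigma}\,\mathbb{E}|(u_{2}(t),\phi_{k})|^{2}$. Since the integrand factorizes into the temporal factor $E_{k}(t-\cdot)$ and the spatial factor $\phi_{k}$, Theorem \ref{thmisometry} applies and bounds each summand by
\[
\mathbb{E}|(u_{2}(t),\phi_{k})|^{2}\le C\,\Big\|{}_{0}\partial_{s}^{\frac{1-2H_{2}}{2}}E_{k}(t-s)\Big\|_{L^{2}((0,t))}^{2}\,\|\phi_{k}\|_{H_{0}^{\frac{1-2H_{1}}{2}}(D)}^{2}.
\]
The spatial factor is immediate from Remarks \ref{Remeq0}--\ref{Remeq}: because $\tfrac{1-2H_{1}}{2}\in[0,\tfrac{3}{2})$ one has $\|\phi_{k}\|_{H_{0}^{(1-2H_{1})/2}(D)}^{2}=\|\phi_{k}\|_{\hat H^{(1-2H_{1})/2}(D)}^{2}=\lambda_{k}^{(1-2H_{1})/2}$. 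The delicate quantity is the temporal factor $I_{k}:=\|{}_{0}\partial_{s}^{(1-2H_{2})/2}E_{k}(t-\cdot)\|_{L^{2}((0,t))}^{2}$, i.e. the $L^{2}$ norm of a fractional derivative of the Mittag--Leffler kernel. I would estimate it from the contour representation \eqref{eqdefEk} and the resolvent bounds, exploiting the scaling $E_{k}(t)=g(\lambda_{k}^{1/\alpha}t)$: the interior/``bulk'' behaviour contributes a decay of order $\lambda_{k}^{-2H_{2}/\alpha}$, while the endpoint behaviour near $s=0$, governed by $E_{k}(t)\sim C\lambda_{k}^{-1}$, contributes a competing lower-order factor. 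Keeping the larger of the two is precisely what produces the minimum of the two exponents, and getting this estimate sharp is the crux of the argument.

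Finally, inserting both estimates gives a bound of the form $\mathbb{E}\|A^{\sigma}u_{2}(t)\|_{L^{2}(D)}^{2}\le C\sum_{k}\lambda_{k}^{2\sigma+\frac{1-2H_{1}}{2}}I_{k}$, where $I_{k}$ decays at the rate set by the competition just described. Using the Weyl-type lower bound $\lambda_{k}\ge Ck^{2/d}$ of Lemma \ref{thmeigenvalue} with $d=1$, the series converges exactly when $2\sigma$ stays below $\min\{\tfrac{2H_{2}}{\alpha}+H_{1}-1,\,2H_{1}+1\}$, which is the asserted range; combined with the bound on $u_{1}$ this closes the proof. Throughout, the one genuinely technical point is the sharp control of $I_{k}$, since the balance between its bulk decay and its endpoint contribution is what fixes the admissible exponents for $\sigma$.
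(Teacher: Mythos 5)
Your proposal follows essentially the same route as the paper's proof: the same splitting into the Duhamel part and the stochastic convolution, the resolvent smoothing bound plus Gr\"onwall (via the $\sigma=0$ case) for the former, and for the latter the eigenfunction expansion combined with Theorem \ref{thmisometry}, the identification $\|\phi_{k}\|_{H_{0}^{(1-2H_{1})/2}(D)}^{2}\simeq\lambda_{k}^{(1-2H_{1})/2}$ through Remarks \ref{Remeq0}--\ref{Remeq}, and summability of the resulting series via Lemma \ref{thmeigenvalue}. The only organizational difference is in the temporal factor: you propose to get its decay in $\lambda_{k}$ from the Mittag--Leffler scaling $E_{k}(t)=E_{\alpha}(-\lambda_{k}t^{\alpha})$ and the bulk/endpoint competition, whereas the paper obtains the same decay by inserting the interpolated resolvent bound $\lambda_{k}^{s}(z^{\alpha}+\lambda_{k})^{-1}\leq C|z|^{\alpha(s-1)}$ directly into the contour integral and extracting a summable prefactor $\lambda_{k}^{-1/2-2\epsilon}$ --- the same computation in different clothing.
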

\begin{proof}
	According to \eqref{eqsolpresentation}, we have
	\begin{equation*}
		\begin{aligned}
			\mathbb{E}\|A^{\sigma}u(t)\|_{L^{2}(D)}^{2}\leq&C\mathbb{E}\left \|\int_{0}^{t}A^{\sigma}\mathcal{R}(t-s)f(u)ds\right \|_{L^{2}(D)}^{2}\\
			&+C\mathbb{E}\left \|\int_{0}^{t}\int_{D}A^{\sigma}\mathcal{G}(t-s,x,y)\xi^{H_{1},H_{2}}(dy,ds)\right \|_{L^{2}(D)}^{2}\\
			\leq&\uppercase\expandafter{\romannumeral1}+\uppercase\expandafter{\romannumeral2}.
		\end{aligned}
	\end{equation*}
	Consider $\uppercase\expandafter{\romannumeral1}$ first. Using the resolvent estimate $\|(z^{\alpha}+A)^{-1}\|\leq C|z|^{-\alpha}$ for $z\in\Sigma_{\theta}$ \cite{Lubich.1996Ndeefaoaeewapmt}, the Cauchy-Schwarz inequality, and the assumptions \eqref{eqnonassump}, we can obtain
	\begin{equation*}
		\begin{aligned}
			\uppercase\expandafter{\romannumeral1}\leq &C\mathbb{E}\left \|\int_{\Gamma_{\theta,\kappa}}e^{zt}A^{\sigma}z^{\alpha-1}(z^{\alpha}+A)^{-1}\tilde{f}(u)dz\right \|_{L^{2}(D)}^{2}\\
			\leq &C \left(\int_{0}^{t}(t-s)^{-\sigma\alpha}\mathbb{E}\|f(u(s))\|_{L^{2}(D)}ds\right)^{2}\\
			\leq &C \int_{0}^{t}(t-s)^{-2\sigma\alpha+1-\epsilon}\mathbb{E}\|f(u(s))\|_{L^{2}(D)}^{2}ds\\
			\leq&C\left (1+\int_{0}^{t}(t-s)^{-2\sigma\alpha+1-\epsilon}\mathbb{E}\|u\|_{L^{2}(D)}^{2}ds\right ),
		\end{aligned}
	\end{equation*}
	where $\tilde{f}(u)$ means the Laplace transform of $f(u)$ and $-2\sigma\alpha+1>-1$ needs to be satisfied, i.e., $\sigma<\frac{1}{\alpha}$.
	
	According to Theorem \ref{thmisometry} and the definition of $\mathcal{G}$, one has
	\begin{equation*}
		\begin{aligned}
			\uppercase\expandafter{\romannumeral2}\leq& C\sum_{k=1}^{\infty}\int_{0}^{t}\left |{}_{0}\partial^{\frac{1-2H_{2}}{2}}_{t}\lambda_{k}^{\sigma}E_{k}(t)\right |^{2}\|\phi_{k}(y)\|_{H^{\frac{1-2H_{1}}{2}}_{0}(D)}^{2}\|\phi_{k}(x)\|_{L^{2}(D)}^{2}dt\\
			\leq & C\sum_{k=1}^{\infty}\int_{0}^{t}\left |\int_{\Gamma_{\theta,\kappa}}e^{zt}z^{\alpha-1+\frac{1-2H_{2}}{2}}\lambda_{k}^{\sigma}(z^{\alpha}+\lambda_{k})^{-1}dz\right |^{2}\\
			&\qquad\qquad\cdot\|\phi_{k}(y)\|_{H^{\frac{1-2H_{1}}{2}}_{0}(D)}^{2}\|\phi_{k}(x)\|_{L^{2}(D)}^{2}dt.
		\end{aligned}
	\end{equation*}
By the resolvent estimate, Remark \ref{Remeq}, Lemma \ref{thmeigenvalue}, and simple calculations, we have
	\begin{equation*}
		\begin{aligned}
			\uppercase\expandafter{\romannumeral2}\leq &C\sum_{k=1}^{\infty}\int_{0}^{t}\left (\int_{\Gamma_{\theta,\kappa}}|e^{zt}||z|^{\alpha-1+\frac{1-2H_{2}}{2}}|\lambda_{k}^{\sigma+\frac{1-2H_{1}}{4}}(z^{\alpha}+\lambda_{k})^{-1}||dz|\right )^{2}dt\\
			\leq &C\sum_{k=1}^{\infty}\lambda_{k}^{-\frac{1}{2}-2\epsilon}\int_{0}^{t}\left (\int_{\Gamma_{\theta,\kappa}}|e^{zt}||z|^{\alpha-1+\frac{1-2H_{2}}{2}}|\lambda_{k}^{\sigma+\frac{1-H_{1}}{2}+\epsilon}(z^{\alpha}+\lambda_{k})^{-1}||dz|\right )^{2}dt\\
			\leq &C\sum_{k=1}^{\infty}\lambda_{k}^{-\frac{1}{2}-2\epsilon}\int_{0}^{t}\left (\int_{\Gamma_{\theta,\kappa}}|e^{zt}||z|^{(\sigma+\frac{1-H_{1}}{2}+\epsilon)\alpha-1+\frac{1-2H_{2}}{2}}|dz|\right )^{2}dt,
		\end{aligned}
	\end{equation*}
	where we need to require $(\sigma+\frac{1-H_{1}}{2}+\epsilon)\alpha-1+\frac{1-2H_{2}}{2}<-\frac{1}{2}$ and $\sigma+\frac{1-2H_{1}}{2}<1$, i.e., $2\sigma< \min\{\frac{2H_{2}}{\alpha}+H_{1}-1,2H_{1}+1\}$. Combining the Gr\"onwall inequality \cite{Elliott.1992EewsandfafemftCe}, the desired results can be obtained.
\end{proof}

\begin{theorem}\label{thmholder}
	Let $u(t)$ be the solution of Eq. \eqref{eqretosol} and $f(u)$ satisfy the assumptions \eqref{eqnonassump}. Assume $2H_{2}+(H_{1}-1)\alpha>0$. Then there is
	\begin{equation}
		\mathbb{E}\left \|\frac{u(t)-u(t-\tau)}{\tau^{\gamma}}\right \|_{L^{2}(D)}^{2}\leq C,
	\end{equation}
	where $\gamma\in[0,2H_{2}+(H_{1}-1)\alpha)$.
\end{theorem}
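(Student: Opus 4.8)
The plan is to start from the mild representation \eqref{eqsolpresentation} and split the temporal increment $u(t)-u(t-\tau)$ into a deterministic part coming from the nonlinearity and a stochastic part coming from the fractional Brownian sheet. Writing
\[
u(t)-u(t-\tau)=\int_{t-\tau}^{t}\mathcal{R}(t-s)f(u(s))ds+\int_{0}^{t-\tau}\big(\mathcal{R}(t-s)-\mathcal{R}(t-\tau-s)\big)f(u(s))ds+\big(Z(t)-Z(t-\tau)\big),
\]
where $Z(t)=\int_{0}^{t}\int_{D}\mathcal{G}(t-s,x,y)\xi^{H_{1},H_{2}}(dy,ds)$, I would first dispose of the two deterministic terms. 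Using the smoothing bounds $\|\mathcal{R}(w)\|\leq C$ and $\|\mathcal{R}'(w)\|\leq Cw^{-1}$ (both immediate from $\tilde{\mathcal{R}}(z)=z^{\alpha-1}(z^{\alpha}+A)^{-1}$, the resolvent estimate, and deformation onto $\Gamma_{\theta,\kappa}$), together with $\mathbb{E}\|f(u(s))\|_{L^{2}(D)}^{2}\leq C$ (from the assumptions \eqref{eqnonassump} and Theorem \ref{thmsobo} with $\sigma=0$), the first integral is of order $\tau$ and the second, after interpolating $\|\mathcal{R}(t-s)-\mathcal{R}(t-\tau-s)\|\leq C\tau^{1-\epsilon}(t-\tau-s)^{-(1-\epsilon)}$, is of order $\tau^{1-\epsilon}$ in mean square. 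Since $2H_{2}+(H_{1}-1)\alpha<1$ for $H_{1}\leq\frac{1}{2}$, these terms are of higher order than $\tau^{\gamma}$ and are harmless; the whole difficulty sits in the stochastic part.

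For the stochastic part I would merge the two stochastic convolutions into a single stochastic integral by setting $\mathcal{G}(w,\cdot,\cdot)=0$ for $w<0$, so that $Z(t)-Z(t-\tau)=\int_{0}^{t}\int_{D}\big(\mathcal{G}(t-s,x,y)-\mathcal{G}(t-\tau-s,x,y)\big)\xi^{H_{1},H_{2}}(dy,ds)$. Expanding $\mathcal{G}$ by \eqref{eqdefGk} and using that $\{\phi_{k}\}$ is $L^{2}$-orthonormal, the $L^{2}(D)$ norm squared decouples over $k$, reducing $\mathbb{E}\|Z(t)-Z(t-\tau)\|_{L^{2}(D)}^{2}$ to a sum of variances of stochastic integrals of the product form demanded by Theorem \ref{thmisometry}. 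Applying that theorem mode by mode, and using the reflection $s\mapsto t-s$ (which leaves the Gagliardo seminorm invariant and lets me work with $E_{k}$ in its own argument, via Remarks \ref{Remeq0} and \ref{Remeq}), the task becomes the estimation of
\[
\sum_{k=1}^{\infty}\Big\|{}_{0}\partial^{\frac{1-2H_{2}}{2}}_{t}\big(E_{k}(t)-E_{k}(t-\tau)\big)\Big\|_{L^{2}((0,t))}^{2}\,\|\phi_{k}\|_{H^{\frac{1-2H_{1}}{2}}_{0}(D)}^{2},
\]
with $E_{k}(\cdot-\tau)$ understood as its zero extension. This is exactly the structure of the noise term in the proof of Theorem \ref{thmsobo}, except that the spatial smoothing factor $\lambda_{k}^{\sigma}$ is now replaced by the temporal increment $E_{k}(t)-E_{k}(t-\tau)$.

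The heart of the matter is extracting the power of $\tau$ from this increment. Representing ${}_{0}\partial^{\frac{1-2H_{2}}{2}}_{t}(E_{k}(t)-E_{k}(t-\tau))$ through the contour \eqref{eqdefEk}, the shift by $\tau$ produces the factor $(1-e^{-z\tau})$, and I would invoke the elementary bound $|1-e^{-z\tau}|\leq C(\tau|z|)^{\gamma}$, valid for every $\gamma\in[0,1]$, to pull out $\tau^{\gamma}$ at the cost of one extra weight $|z|^{\gamma}$ inside the integrand. After this the computation runs in parallel with Theorem \ref{thmsobo}: the resolvent estimate $\|(z^{\alpha}+A)^{-1}\|\leq C|z|^{-\alpha}$, Remark \ref{Remeq}, and Lemma \ref{thmeigenvalue} are used to control $\|\phi_{k}\|_{H^{\frac{1-2H_{1}}{2}}_{0}(D)}^{2}$ by a power of $\lambda_{k}$, to absorb $\lambda_{k}$ into the resolvent, and finally to carry out the $z$-contour and the $dt$ integrations. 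Requiring both integrations to converge fixes the admissible range of $\gamma$, the standing hypothesis $2H_{2}+(H_{1}-1)\alpha>0$ guaranteeing the base case $\gamma=0$. The obstacle I anticipate is precisely the bookkeeping around the short initial interval $(0,\tau)$, where $E_{k}(t-\tau)$ vanishes while $E_{k}(0)=1$: one must verify that the nonlocal Riemann-Liouville derivative of the zero-extended increment is still governed by the uniform bound $|1-e^{-z\tau}|\leq C(\tau|z|)^{\gamma}$, so that the claimed $\tau^{\gamma}$ gain genuinely survives both the $L^{2}((0,t))$ integration and the summation over $k$.
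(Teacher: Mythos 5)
Your overall architecture (mild-solution split \eqref{eqsolpresentation}, mode-by-mode application of Theorem \ref{thmisometry}, contour estimates in the style of Theorem \ref{thmsobo}) matches the paper's, and your treatment of the two deterministic terms is sound. The genuine gap is exactly the point you flag at the end and leave unverified: the uniform bound $|1-e^{-z\tau}|\leq C(\tau|z|)^{\gamma}$ is \emph{false} on $\Gamma_{\theta,\kappa}$. The contour must have opening angle $\theta\in(\tfrac{\pi}{2},\pi)$ so that $e^{zt}$ decays along the rays, and there $\mathrm{Re}\,z=|z|\cos\theta<0$, hence $|e^{-z\tau}|=e^{-\tau|z|\cos\theta}$ grows exponentially in $|z|$ and no bound of the form $C(\tau|z|)^{\gamma}$ can hold. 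The factor that \emph{is} bounded on the contour is $e^{z\tau}-1$ (since $|e^{z\tau}|\leq C$ there), which forces the factorization of the kernel increment as $e^{z(t-\tau-s)}(e^{z\tau}-1)z^{\alpha-1}(z^{\alpha}+\lambda_{k})^{-1}$; this requires the exponent $t-\tau-s$ to be positive, i.e.\ it only works for $s\in(0,t-\tau)$. Consequently your merged integral over all of $(0,t)$ (obtained by zero-extension of $\mathcal{G}$) cannot be handled by a single application of this device: on the strip $s\in(t-\tau,t]$, where the zero extension makes the ``increment'' equal to $E_{k}(t-s)$ alone, there is no difference from which to extract any positive power of $\tau$.

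This is precisely why the paper keeps the two stochastic pieces separate. The increment part over $(0,t-\tau)$ is treated with the bounded factor $\left|\frac{e^{z\tau}-1}{\tau^{\gamma}}\right|\leq C|z|^{\gamma}$, as you intend; the boundary part $\mathbb{E}\left\|\tau^{-\gamma}\int_{t-\tau}^{t}\int_{D}\mathcal{G}(t-s,x,y)\xi^{H_{1},H_{2}}(dy,ds)\right\|_{L^{2}(D)}^{2}$ is estimated \emph{directly}, with no $\tau$-extraction: Theorem \ref{thmisometry}, the resolvent estimate, and Lemma \ref{thmeigenvalue} give an integrand of order $s^{(H_{1}-1)\alpha+2H_{2}-1-\alpha\epsilon}$, whose integral over $(0,\tau)$ produces $\tau^{2H_{2}+(H_{1}-1)\alpha-\alpha\epsilon}$, so this term is bounded by $C\tau^{2H_{2}+(H_{1}-1)\alpha-2\gamma-\alpha\epsilon}$ and the restriction $2\gamma<2H_{2}+(H_{1}-1)\alpha$ enters. (Incidentally, both pieces in the paper's proof impose $2\gamma<2H_{2}+(H_{1}-1)\alpha$; your route, once repaired, hits the same constraint, so you would recover exactly what the paper's argument actually delivers.) In short, your plan is repairable, but only by reinstating the split you tried to avoid: the interval $(t-\tau,t]$ (equivalently $(0,\tau)$ after your reflection) must be estimated separately and by brute force, not via the factor $1-e^{-z\tau}$.
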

\begin{proof}
	According to \eqref{eqsolpresentation}, we have
	\begin{equation*}
		\begin{aligned}
			&\mathbb{E}\left \|\frac{u(t)-u(t-\tau)}{\tau^{\gamma}}\right \|_{L^{2}(D)}^{2}\\
			\leq&C\mathbb{E}\left \|\frac{1}{\tau^{\gamma}}\left (\int_{0}^{t}\mathcal{R}(t-s)f(u)ds-\int_{0}^{t-\tau}\mathcal{R}(t-\tau-s)f(u)ds\right )\right \|_{L^{2}(D)}^{2}\\
			&+C\mathbb{E}\left \|\frac{1}{\tau^{\gamma}}\left (\int_{0}^{t}\int_{D}\mathcal{G}(t-s,x,y)\xi^{H_{1},H_{2}}(dy,ds)\right.\right.\\
			&\qquad\left.\left.-\int_{0}^{t-\tau}\int_{D}\mathcal{G}(t-\tau-s,x,y)\xi^{H_{1},H_{2}}(dy,ds)\right )\right \|_{L^{2}(D)}^{2}\\
			\leq&\uppercase\expandafter{\romannumeral1}+\uppercase\expandafter{\romannumeral2}.
		\end{aligned}
	\end{equation*}
	As for $\uppercase\expandafter{\romannumeral1}$, there holds
	\begin{equation*}
		\begin{aligned}
			\uppercase\expandafter{\romannumeral1}\leq&C\mathbb{E}\left \|\frac{1}{\tau^{\gamma}}\int_{0}^{t-\tau}(\mathcal{R}(t-s)-\mathcal{R}(t-\tau-s))f(u)ds\right \|_{L^{2}(D)}^{2}\\
			&+C\mathbb{E}\left \|\frac{1}{\tau^{\gamma}}\int_{t-\tau}^{t}\mathcal{R}(t-s)f(u)ds\right \|_{L^{2}(D)}^{2}\\
			\leq &\uppercase\expandafter{\romannumeral1}_{1}+\uppercase\expandafter{\romannumeral1}_{2}.
		\end{aligned}
	\end{equation*}
	By the fact $\left |\frac{e^{z\tau}-1}{\tau^{\gamma}}\right |<C|z|^{\gamma}$ with $z\in\Gamma_{\theta,\kappa}$ and $\gamma\in[0,1]$ \cite{Gunzburger.2018ScrotdfstPstaswn} and Theorem \ref{thmsobo}, we have
	\begin{equation*}
		\begin{aligned}
			\uppercase\expandafter{\romannumeral1}_{1}\leq&C\left(\int_{0}^{t-\tau}\left \|\int_{\Gamma_{\theta,\kappa}}e^{z(t-s-\tau)}\frac{e^{z\tau}-1}{\tau^{\gamma}}z^{\alpha-1}(z^{\alpha}+A)^{-1}dz\right \|\mathbb{E}\|f(u)\|_{L^{2}(D)}ds\right)^{2}\\
			\leq&C\left(\int_{0}^{t-\tau}(t-\tau-s)^{-\gamma}(1+\mathbb{E}\|u\|_{L^{2}(D)})dr\right)^{2}\\
			\leq&C,
		\end{aligned}
	\end{equation*}
	where $\gamma\in[0,1)$. Similarly, for $\gamma\in[0,1)$, one can obtain
	\begin{equation*}
		\begin{aligned}
			\uppercase\expandafter{\romannumeral1}_{2}\leq& C\tau^{1-2\gamma}\int_{t-\tau}^{t}\mathbb{E}\|f(u)\|_{L^2(D)}^{2}ds
			\leq C.
		\end{aligned}
	\end{equation*}
	For $\uppercase\expandafter{\romannumeral2}$, we can split it into the following two parts
	\begin{equation*}
		\begin{aligned}
			\uppercase\expandafter{\romannumeral2}\leq& C\mathbb{E}\left\|\frac{1}{\tau^{\gamma}}\left(\int_{0}^{t-\tau}\int_{D}(\mathcal{G}(t-s,x,y)-\mathcal{G}(t-\tau-s,x,y))\xi^{H_{1},H_{2}}(dy,ds)\right)\right \|_{L^{2}(D)}^{2}\\
			& +C\mathbb{E}\left\|\frac{1}{\tau^{\gamma}}\int_{t-\tau}^{t}\int_{D}\mathcal{G}(t-s,x,y)\xi^{H_{1},H_{2}}(dy,ds)\right \|_{L^{2}(D)}^{2}\leq \uppercase\expandafter{\romannumeral2}_{1}+\uppercase\expandafter{\romannumeral2}_{2}.
		\end{aligned}
	\end{equation*}
Using Theorem \ref{thmisometry} and Lemma \ref{thmeigenvalue} yields
	\begin{equation*}
		\begin{aligned}
			\uppercase\expandafter{\romannumeral2}_{1}\leq&C\sum_{k=1}^{\infty}\int_{0}^{t-\tau}\frac{1}{\tau^{2\gamma}}\left|{}_{0}\partial^{\frac{1-2H_{2}}{2}}_{t}(E_{k}(t-s)-E_{k}(t-\tau-s))\right|^{2}\|\phi_{k}(y)\|^{2}_{H^{\frac{1-2H_{1}}{2}}_{0}(D)}ds\\
			\leq& C\sum_{k=1}^{\infty}\int_{0}^{t-\tau}\left|\int_{\Gamma_{\theta,\kappa}}e^{-z(t-\tau-s)}\frac{e^{z\tau}-1}{\tau^{\gamma}}z^{\frac{1-2H_{2}}{2}}\lambda^{\frac{1-2H_{1}}{4}}_{k}z^{\alpha-1}(z^{\alpha}+\lambda_{k})^{-1}dz\right |^{2}ds\\
			\leq& C\sum_{k=1}^{\infty}\lambda_{k}^{-1/2-\epsilon}\int_{0}^{t-\tau}\left(\int_{\Gamma_{\theta,\kappa}}|e^{-z(t-\tau-s)}||z|^{\gamma+\frac{1-2H_{2}}{2}+\frac{1-H_{1}}{2}\alpha-1+\frac{\alpha\epsilon}{2}}|dz|\right )^{2}ds\\
			\leq& C\sum_{k=1}^{\infty}\lambda_{k}^{-1/2-\epsilon}\int_{0}^{t-\tau}s^{(H_{1}-1)\alpha+2H_{2}-1-2\gamma-\alpha\epsilon}ds,
		\end{aligned}
	\end{equation*}
	where we need to require $2\gamma<2H_{2}+(H_{1}-1)\alpha$. Similarly, for $\uppercase\expandafter{\romannumeral2}_{2}$, one has
	\begin{equation*}
		\begin{aligned}
			\uppercase\expandafter{\romannumeral2}_{2}\leq& C\frac{1}{\tau^{2\gamma}}\sum_{k=1}^{\infty}\int_{t-\tau}^{t}\left({}_{0}\partial^{\frac{1-2H_{2}}{2}}_{t}\lambda^{\frac{1-2H_{1}}{4}}_{k}E_{k}(t-s)\right)^{2}ds\\
			\leq& C\frac{1}{\tau^{2\gamma}}\sum_{k=1}^{\infty}\lambda_{k}^{-1/2-\epsilon}\int_{t-\tau}^{t}\left(\int_{\Gamma_{\theta,\kappa}}|e^{-z(t-s)}||z|^{\frac{1-2H_{2}}{2}+\frac{1-H_{1}}{2}\alpha-1+\frac{\alpha\epsilon}{2}}|dz|\right )^{2}ds\\
			\leq& C\frac{1}{\tau^{2\gamma}}\sum_{k=1}^{\infty}\lambda_{k}^{-1/2-\epsilon}\int_{0}^{\tau}s^{(H_{1}-1)\alpha+2H_{2}-1-\alpha\epsilon}ds,
		\end{aligned}
	\end{equation*}
	where we need to require $2\gamma<2H_{2}+(H_{1}-1)\alpha$. Collecting the above estimates leads to the desired result.
\end{proof}
\section{Wong-Zakai Approximation}\label{sec3}
In this section, we use the Wong-Zakai approximation \cite{EugeneWong.1965OtCoOItSI,Eugene.1965Otrboasde} to regularize the fractional Brownian sheet noise and provide a systematic approach to prove the convergence of the Wong-Zakai approximation.

Here, we introduce the Wong-Zakai approximation first. Let $\tau=T/M$ and $h=l/N$. Denote $I_{i}=(t_{i},t_{i+1}]$ and $D_{j}=(x_{j},x_{j+1}]$ with $t_{i}=i\tau$ ($i=0,1,\ldots,M$) and $x_{j}=jh$ ($j=0,1,\ldots,N$). The Wong-Zakai approximation of $\xi^{H_{1},H_{2}}(x,t)$ can be written as
\begin{equation}\label{eqdefxiR}
	\xi_{R}^{H_{1},H_{2}}(x,t)=\sum_{i=0}^{M-1}\sum_{j=0}^{N-1}\left(\frac{1}{\tau  h}\int_{I_{i}}\int_{D_{j}}\xi^{H_{1},H_{2}}(dy,ds)\right)\chi_{I_{i}\times D_{j}}(t,x),
\end{equation}
where $\chi_{I_{i}\times D_{j}}(t,x)$ is the characteristic function on $I_{i}\times D_{j}$.
Then we introduce $u_{R}(x,t)$ as the solution of the following regularized equation, i.e.,
\begin{equation}\label{eqregularizeeq}
	\left\{\begin{aligned}
		&\partial_{t}u_{R}(x,t)+{}_{0}\partial_{t}^{1-\alpha}A u_{R}(x,t)=f(u_{R})+\xi_{R}^{H_{1},H_{2}}(x,t)\qquad (x,t)\in{D}\times (0,T],\\
		&u_{R}(x,0)=0\qquad x\in{D},\\
		&u_{R}(x,t)=0\qquad (x,t)\in\partial{D}\times (0,T].
	\end{aligned}\right.
\end{equation}
Simple calculations lead to
\begin{equation}\label{eqregsolpre}
	u_{R}(x,t)=\int_{0}^{t}\mathcal{R}(t-s)f(u_{R})ds+\int_{0}^{t}\int_{D}\mathcal{G}_{R}(t-s,x,y)\xi^{H_{1},H_{2}}(dy,ds).
\end{equation}
Here, $\mathcal{G}_{R}(t,x,y)$ is defined by
\begin{equation*}
	\begin{aligned}
		\mathcal{G}_{R}(t,x,y)=&\sum_{i=0}^{M-1}\sum_{j=0}^{N-1}\frac{\chi_{I_{i}\times D_{j}}(t,y)}{\tau h}\int_{I_{i}}\int_{D_{j}}\mathcal{G}(t,x,y)dtdy\\
		=&\sum_{k=1}^{\infty}E_{R,k}(t)\phi_{k}(x)\phi_{R,k}(y),
	\end{aligned}
\end{equation*}
where
\begin{equation*}
	E_{R,k}(t)=\frac{1}{\tau}\sum_{i=0}^{M-1}\chi_{I_{i}}(t)\int_{I_{i}}E_{k}(t)dt,\qquad
	\phi_{R,k}(y)=\frac{1}{h}\sum_{j=0}^{N-1}\chi_{I_{j}}(y)\int_{D_{j}}\phi_{k}(y)dt.
\end{equation*}

\begin{lemma}\label{lemEkest}
	Let $E_{k}(t)$ be defined in \eqref{eqdefEk}. For $k>0$, $\sigma\in[0,1]$, and $\gamma\geq 0$, if $\gamma+\sigma\alpha<\frac{1}{2}$, then there exists a uniform constant $C$ such that
	\begin{equation}
		\|\lambda^{\sigma}_{k}E_{k}(t)\|_{H^{\gamma}((0,T))}=\|{}_{0}\partial^{\gamma}_{t}\lambda^{\sigma}_{k}E_{k}(t)\|_{L^{2}((0,T))}\leq C.
	\end{equation}
\end{lemma}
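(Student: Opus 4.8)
The plan is to reduce everything to a pointwise-in-$t$ estimate of the integrand obtained after pushing the Riemann--Liouville derivative inside the contour integral \eqref{eqdefEk} defining $E_{k}$, and then to integrate in $t$. The first equality in the statement needs no separate argument: it is exactly Remark~\ref{Remeq0} with the spatial interval $D$ replaced by the time interval $(0,T)$, and it is meaningful precisely because the hypothesis $\gamma+\sigma\alpha<\tfrac12$ forces $\gamma\in[0,\tfrac12)\subset(0,1)$, so ${}_{0}\partial^{\gamma}_{t}$ and the $H^{\gamma}$-seminorm are exactly the objects treated there (for $\gamma=0$ the claim is just an $L^{2}$ bound). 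Hence it suffices to control $\|{}_{0}\partial^{\gamma}_{t}\lambda_{k}^{\sigma}E_{k}(t)\|_{L^{2}((0,T))}$ uniformly in $k$.

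First I would record that, since $E_{k}$ is causal and bounded near $t=0$, the boundary term ${}_{0}\partial^{\gamma-1}_{t}E_{k}$ vanishes at $t=0$, so that applying ${}_{0}\partial^{\gamma}_{t}$ corresponds under the Laplace transform to multiplication by $z^{\gamma}$. Differentiating under the integral sign in \eqref{eqdefEk} then yields the representation
\begin{equation*}
	{}_{0}\partial^{\gamma}_{t}\lambda_{k}^{\sigma}E_{k}(t)=\frac{1}{2\pi\mathbf{i}}\int_{\Gamma_{\theta,\kappa}}e^{zt}\,z^{\gamma+\alpha-1}\lambda_{k}^{\sigma}(z^{\alpha}+\lambda_{k})^{-1}\,dz,
\end{equation*}
which is precisely the manipulation already used in the proof of Theorem~\ref{thmsobo}. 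The key algebraic input is the elementary resolvent-type bound $\lambda_{k}^{\sigma}|z^{\alpha}+\lambda_{k}|^{-1}\le C|z|^{-(1-\sigma)\alpha}$ for $z\in\Sigma_{\theta}$ and $\sigma\in[0,1]$, a consequence of $\|(z^{\alpha}+A)^{-1}\|\le C|z|^{-\alpha}$ \cite{Lubich.1996Ndeefaoaeewapmt} together with Young's inequality $|z|^{(1-\sigma)\alpha}\lambda_{k}^{\sigma}\le |z|^{\alpha}+\lambda_{k}$; crucially this estimate is uniform in $k$.

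Inserting this bound into the representation and parametrising $\Gamma_{\theta,\kappa}$ in the usual way (choosing the radius $\kappa\sim t^{-1}$ so that the ray contributions dominate) yields the pointwise estimate $|{}_{0}\partial^{\gamma}_{t}\lambda_{k}^{\sigma}E_{k}(t)|\le C\,t^{-(\gamma+\sigma\alpha)}$ with $C$ independent of $k$, where I use the standard bound $\int_{\Gamma_{\theta,\kappa}}|e^{zt}|\,|z|^{\mu}\,|dz|\le Ct^{-\mu-1}$ with $\mu=\gamma+\sigma\alpha-1>-1$. Squaring and integrating then gives $\|{}_{0}\partial^{\gamma}_{t}\lambda_{k}^{\sigma}E_{k}\|_{L^{2}((0,T))}^{2}\le C\int_{0}^{T}t^{-2(\gamma+\sigma\alpha)}\,dt$, which is finite exactly when $2(\gamma+\sigma\alpha)<1$, i.e. under the hypothesis $\gamma+\sigma\alpha<\tfrac12$, and the resulting constant is uniform in $k$, as required.

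The only genuinely delicate point is the passage to the contour representation of the fractional derivative: one must justify interchanging ${}_{0}\partial^{\gamma}_{t}$ with the $\Gamma_{\theta,\kappa}$-integral and verify that no initial-value term survives, which rests on the causality and near-origin boundedness of $E_{k}$. Everything after that is a routine sectorial/resolvent computation, and the hypothesis $\gamma+\sigma\alpha<\tfrac12$ enters only at the final step to guarantee convergence of the temporal integral.
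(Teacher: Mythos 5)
Your proposal is correct and follows essentially the same route as the paper's own proof: push ${}_{0}\partial^{\gamma}_{t}$ into the contour representation \eqref{eqdefEk}, use the $k$-uniform sectorial bound $\lambda_{k}^{\sigma}|z^{\alpha}+\lambda_{k}|^{-1}\leq C|z|^{-(1-\sigma)\alpha}$ to get the pointwise estimate $Ct^{-\gamma-\sigma\alpha}$, and square-integrate over $(0,T)$ under the condition $\gamma+\sigma\alpha<\tfrac12$. The only difference is that you spell out the justification of interchanging the fractional derivative with the contour integral and the norm identity via Remark~\ref{Remeq0}, which the paper leaves implicit.
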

\begin{proof}
	By the definition of $E_{k}(t)$ and the resolvent estimate \cite{Lubich.1996Ndeefaoaeewapmt}, we have
	\begin{equation*}
		\begin{aligned}
			\|{}_{0}\partial^{\gamma}_{t}\lambda^{\sigma}_{k}E_{k}(t)\|_{L^{2}((0,T))}^{2}\leq&C\int_{0}^{T}\left(\int_{\Gamma_{\theta,\kappa}}|e^{zt}|\lambda^{\sigma}_{k}|z|^{\gamma+\alpha-1}|(z^{\alpha}+\lambda_{k})|^{-1}|dz|\right)^{2}dt\\
			\leq&C\int_{0}^{T}\left(\int_{\Gamma_{\theta,\kappa}}|e^{zt}||z|^{\gamma+\sigma\alpha-1}|dz|\right)^{2}dt\\
			\leq &C\int_{0}^{T}t^{-2\gamma-2\sigma\alpha}dt,
		\end{aligned}
	\end{equation*}
	where $C$ is a positive constant independent of $k$. To preserve the boundedness of $\|{}_{0}\partial^{\gamma}_{t}\lambda^{\sigma}_{k}E_{k}(t)\|_{L^{2}((0,T))}^{2}$, we need to require $\gamma<\frac{1}{2}-\sigma\alpha$.
\end{proof}

\begin{theorem}\label{thmurB}
	Let $u_{R}(t)$ be the solution of \eqref{eqregularizeeq} and $f(u)$ satisfy the assumptions \eqref{eqnonassump}. Assume $2H_{2}+(H_{1}-1)\alpha>0$. Then we have
	\begin{equation*}
		\mathbb{E}\|A^{\sigma}u_{R}(t)\|_{L^{2}(D)}^{2}\leq C,
	\end{equation*}
	where $2\sigma\in[0,\min\{\frac{2H_{2}}{\alpha}+H_{1}-1,2H_{1}+1\})$.
\end{theorem}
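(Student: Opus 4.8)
The proof will run in close parallel to that of Theorem~\ref{thmsobo}, the only genuinely new ingredient being the control of the mesh-averaged kernel $\mathcal{G}_{R}$ in place of $\mathcal{G}$. Starting from the mild representation \eqref{eqregsolpre}, I would split
\[
\mathbb{E}\|A^{\sigma}u_{R}(t)\|_{L^{2}(D)}^{2}\leq \mathrm{I}+\mathrm{II},
\]
where $\mathrm{I}$ collects the deterministic convolution with $f(u_{R})$ and $\mathrm{II}$ the stochastic integral against $\mathcal{G}_{R}$. Term $\mathrm{I}$ is handled verbatim as in Theorem~\ref{thmsobo}: the resolvent estimate, the Cauchy--Schwarz inequality, and the growth/Lipschitz assumptions \eqref{eqnonassump} give $\mathrm{I}\leq C\big(1+\int_{0}^{t}(t-s)^{-2\sigma\alpha+1-\epsilon}\mathbb{E}\|u_{R}(s)\|_{L^{2}(D)}^{2}ds\big)$, valid for $\sigma<\frac{1}{\alpha}$, which is eventually absorbed by a Gr\"onwall argument once $\mathrm{II}$ is shown to be bounded.

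For $\mathrm{II}$ I would first exploit that, in the expansion $\mathcal{G}_{R}(t-s,x,y)=\sum_{k}E_{R,k}(t-s)\phi_{k}(x)\phi_{R,k}(y)$, the $x$-dependence is still carried by the \emph{un-averaged} orthonormal eigenfunctions $\phi_{k}$. Hence, applying $A^{\sigma}$ in $x$, expanding the square, and integrating over $x\in D$, the $L^{2}(D)$-orthogonality of $\{\phi_{k}\}$ annihilates all cross terms and reduces $\mathrm{II}$ to the diagonal sum
\[
\mathrm{II}\leq C\sum_{k=1}^{\infty}\lambda_{k}^{2\sigma}\,\mathbb{E}\left|\int_{0}^{t}\int_{D}E_{R,k}(t-s)\phi_{R,k}(y)\,\xi^{H_{1},H_{2}}(dy,ds)\right|^{2}.
\]
Each summand is now a genuine product of a time factor $E_{R,k}$ and a space factor $\phi_{R,k}$, so Theorem~\ref{thmisometry} applies directly and yields
\[
\mathrm{II}\leq C\sum_{k=1}^{\infty}\int_{0}^{t}\left|{}_{0}\partial^{\frac{1-2H_{2}}{2}}_{t}\lambda_{k}^{\sigma}E_{R,k}(t)\right|^{2}\|\phi_{R,k}\|_{H^{\frac{1-2H_{1}}{2}}_{0}(D)}^{2}\,dt,
\]
which matches the corresponding bound in Theorem~\ref{thmsobo} with $E_{k},\phi_{k}$ replaced by $E_{R,k},\phi_{R,k}$.

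The heart of the argument, and the step I expect to be the main obstacle, is to show that this replacement costs only a mesh-independent constant. Both $E_{R,k}=P_{\tau}E_{k}$ and $\phi_{R,k}=Q_{h}\phi_{k}$ are $L^{2}$-orthogonal projections of $E_{k}$ and $\phi_{k}$ onto the piecewise-constant spaces on the temporal and spatial grids. Since the two fractional orders in play, $\frac{1-2H_{2}}{2}$ and $\frac{1-2H_{1}}{2}$, are both strictly below $\frac12$ (because $H_{1},H_{2}\in(0,\frac12]$), these projections are stable on the corresponding fractional Sobolev spaces with constants independent of $\tau$ and $h$; I would therefore bound $\|{}_{0}\partial^{\frac{1-2H_{2}}{2}}_{t}\lambda_{k}^{\sigma}E_{R,k}\|_{L^{2}}\leq C\|{}_{0}\partial^{\frac{1-2H_{2}}{2}}_{t}\lambda_{k}^{\sigma}E_{k}\|_{L^{2}}$ and, using the norm equivalence of Remark~\ref{Remeq}, $\|\phi_{R,k}\|_{H^{\frac{1-2H_{1}}{2}}_{0}(D)}\leq C\|\phi_{k}\|_{H^{\frac{1-2H_{1}}{2}}_{0}(D)}\leq C\lambda_{k}^{\frac{1-2H_{1}}{4}}$. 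The constants here are automatically independent of $k$, since the projections do not see $k$; the delicate point is precisely the fractional-order stability below $\frac12$, which I would establish either directly from the Aronszajn--Slobodeckij seminorm or by interpolating the obvious $L^{2}$-stability with a coarser bound, crucially using $s<\frac12$.

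Once the averaged factors are dominated by their un-averaged counterparts, the remainder is identical to Theorem~\ref{thmsobo}: inserting the resolvent estimate into the contour representation of ${}_{0}\partial^{\frac{1-2H_{2}}{2}}_{t}\lambda_{k}^{\sigma+\frac{1-2H_{1}}{4}}E_{k}$, factoring out the summable power $\lambda_{k}^{-\frac12-\epsilon}$ (summable by Lemma~\ref{thmeigenvalue}), and demanding temporal square-integrability of the resulting contour integral forces exactly $(\sigma+\frac{1-H_{1}}{2}+\epsilon)\alpha-1+\frac{1-2H_{2}}{2}<-\frac12$ together with $\sigma+\frac{1-2H_{1}}{2}<1$, i.e. $2\sigma<\min\{\frac{2H_{2}}{\alpha}+H_{1}-1,2H_{1}+1\}$. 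This bounds $\mathrm{II}$ by a constant, and a Gr\"onwall inequality applied to $\mathrm{I}+\mathrm{II}$ then delivers $\mathbb{E}\|A^{\sigma}u_{R}(t)\|_{L^{2}(D)}^{2}\leq C$ on the stated range of $\sigma$, uniformly in $\tau$ and $h$.
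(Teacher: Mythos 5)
Your proposal is correct and follows essentially the same route as the paper: the same splitting into a deterministic part handled as in Theorem~\ref{thmsobo} and a stochastic part reduced mode-by-mode via Theorem~\ref{thmisometry}, with the averaged factors $E_{R,k}$, $\phi_{R,k}$ dominated by $E_{k}$, $\phi_{k}$ through the stability of piecewise-constant $L^{2}$ projections in fractional Sobolev norms of order below $\tfrac12$ (the paper phrases this as a triangle inequality plus standard approximation theory, which is equivalent to your stability claim), followed by the identical contour-integral computation and Gr\"onwall argument. Your explicit use of the $L^{2}(D)$-orthogonality of $\{\phi_{k}\}$ to kill the cross terms before invoking the isometry is left implicit in the paper but is exactly the right justification.
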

\begin{proof}
	Here we split $\mathbb{E}\|A^{\sigma}u_{R}(t)\|_{L^{2}(D)}^{2}$ into two parts, i.e.,
	\begin{equation*}
		\begin{aligned}
			\mathbb{E}\|A^{\sigma}u_{R}(t)\|_{L^{2}(D)}^{2}\leq &C\mathbb{E}\left \|\int_{0}^{t}A^{\sigma}\mathcal{R}(t-s)f(u_{R})ds\right \|_{L^{2}(D)}^{2}\\
			&+C\mathbb{E}\left \|\int_{0}^{t}\int_{D}A^{\sigma}\mathcal{G}_{R}(t-s,x,y)\xi^{H_{1},H_{2}}(dy,ds)\right \|_{L^{2}(D)}^{2}\\
			\leq & \uppercase\expandafter{\romannumeral1}+\uppercase\expandafter{\romannumeral2}.
		\end{aligned}
	\end{equation*}
	Similar to the proof of Theorem \ref{thmsobo}, one has
	\begin{equation}
		\uppercase\expandafter{\romannumeral1}\leq C\left (1+\int_{0}^{t}(t-s)^{-2\sigma\alpha+1-\epsilon}\mathbb{E}\|u_{R}\|_{L^{2}(D)}^{2}ds\right ).
	\end{equation}
	Using the standard approximation theory \cite{Brenner.2008TMToFEM} and $H_{1},~H_{2}\in(0,\frac{1}{2}]$, we have
	\begin{equation}\label{eqErkest}
		\begin{aligned}
			\left \|{}_{0}\partial^{\frac{1-2H_{2}}{2}}_{t}E_{R,k}(t)\right \|_{L^{2}((0,T))}^{2}
			\leq& C\left \|{}_{0}\partial^{\frac{1-2H_{2}}{2}}_{t}(E_{R,k}(t)-E_{k}(t))\right \|_{L^{2}((0,T))}^{2}\\
			&+C\left \|{}_{0}\partial^{\frac{1-2H_{2}}{2}}_{t}E_{k}(t)\right \|_{L^{2}((0,T))}^{2}\\
			\leq & C\left \|{}_{0}\partial^{\frac{1-2H_{2}}{2}}_{t}E_{k}(t)\right \|_{L^{2}((0,T))}^{2}
		\end{aligned}
	\end{equation}
	and
	\begin{equation}\label{eqphiRkest}
		\begin{aligned}
			\|\phi_{R,k}(y)\|_{H^{\frac{1-2H_{1}}{2}}_{0}({D})}^{2}
			\leq& C\|\phi_{R,k}(y)-\phi_{k}(y)\|_{H^{\frac{1-2H_{1}}{2}}_{0}({D})}^{2}+C\|\phi_{k}(y)\|_{H^{\frac{1-2H_{1}}{2}}_{0}({D})}^{2}\\
			\leq&C\|\phi_{k}(y)\|_{H^{\frac{1-2H_{1}}{2}}_{0}({D})}^{2}.
		\end{aligned}
	\end{equation}
	Thus, combining Theorem \ref{thmisometry}, one has
	\begin{equation*}
		\begin{aligned}
			\uppercase\expandafter{\romannumeral2}\leq&C\sum_{k=1}^{\infty}\lambda^{2\sigma}_{k}\left \|{}_{0}\partial^{\frac{1-2H_{2}}{2}}_{t}E_{k}(t)\right \|_{L^{2}((0,T))}^{2}\|\phi_{k}(y)\|_{H^{\frac{1-2H_{1}}{2}}_{0}({D})}^{2}.
		\end{aligned}
	\end{equation*}
	Similar to the proof of Theorem \ref{thmsobo}, we can get
	\begin{equation*}
		\uppercase\expandafter{\romannumeral2}\leq C
	\end{equation*}
	with $2\sigma\in[0,\min\{\frac{2H_{2}}{\alpha}+H_{1}-1,2H_{1}+1\})$. Combining above estimates and using  Gr\"{o}nwall inequality \cite{Elliott.1992EewsandfafemftCe,Nie.2020NaftsfFswtis} lead to the desired results.
\end{proof}

\begin{theorem}\label{thmreglarerr}
	Let $u(t)$ and $u_{R}(t)$ be the solutions of Eqs. \eqref{eqretosol} and \eqref{eqregularizeeq}, respectively. Assume $2H_{2}+(H_{1}-1)\alpha>0$. Then there exists a constant $C$ such that
	\begin{equation*}
		\mathbb{E}\|u(t)-u_{R}(t)\|_{L^{2}(D)}^{2}\leq C(h^{2\sigma+2H_{1}-1}+\tau^{2H_{2}-\frac{\alpha}{2}-\epsilon}h^{2H_{1}-1}),
	\end{equation*}
	where $\sigma\in(\frac{1-2H_{1}}{2}
	,\min\{\frac{2H_{2}}{\alpha}-\frac{1}{2},1+\epsilon\})$.
\end{theorem}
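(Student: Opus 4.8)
The plan is to subtract the mild representations \eqref{eqsolpresentation} and \eqref{eqregsolpre}, so that
\[
u(t)-u_{R}(t)=\int_{0}^{t}\mathcal{R}(t-s)\big(f(u(s))-f(u_{R}(s))\big)ds+\int_{0}^{t}\int_{D}\big(\mathcal{G}(t-s,x,y)-\mathcal{G}_{R}(t-s,x,y)\big)\xi^{H_{1},H_{2}}(dy,ds),
\]
and then bound $\mathbb{E}\|u(t)-u_{R}(t)\|_{L^{2}(D)}^{2}$ by the squared $L^{2}(D)$-expectations of the two integrals, say $\uppercase\expandafter{\romannumeral1}$ (nonlinear, deterministic) and $\uppercase\expandafter{\romannumeral2}$ (stochastic). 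For $\uppercase\expandafter{\romannumeral1}$, since the error is measured in $L^{2}(D)$, I would take $\sigma=0$ in the resolvent estimate $\|A^{\sigma}\mathcal{R}(t-s)\|\le C(t-s)^{-\sigma\alpha}$, so that $\|\mathcal{R}(t-s)\|\le C$, and combine the Lipschitz assumption \eqref{eqnonassump} with the Cauchy--Schwarz inequality exactly as in the proof of Theorem \ref{thmsobo} to obtain $\uppercase\expandafter{\romannumeral1}\le C\int_{0}^{t}\mathbb{E}\|u(s)-u_{R}(s)\|_{L^{2}(D)}^{2}ds$. This term will be absorbed at the very end through the Gr\"onwall inequality \cite{Elliott.1992EewsandfafemftCe}.

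The core is $\uppercase\expandafter{\romannumeral2}$. Writing $\mathcal{G}=\sum_{k}E_{k}\phi_{k}(x)\phi_{k}(y)$ and $\mathcal{G}_{R}=\sum_{k}E_{R,k}\phi_{k}(x)\phi_{R,k}(y)$, I would split the kernel difference diagonally as
\[
E_{k}\phi_{k}(y)-E_{R,k}\phi_{R,k}(y)=E_{k}\big(\phi_{k}(y)-\phi_{R,k}(y)\big)+\big(E_{k}-E_{R,k}\big)\phi_{R,k}(y),
\]
which isolates a pure spatial error from a pure temporal error. Pulling the orthonormal $\phi_{k}(x)$ out of the $L^{2}(D)$-norm and applying Theorem \ref{thmisometry} termwise (each summand is separable in $(s,y)$, as in the proofs of Theorems \ref{thmsobo} and \ref{thmurB}) reduces $\uppercase\expandafter{\romannumeral2}$ to $C(\uppercase\expandafter{\romannumeral2}_{1}+\uppercase\expandafter{\romannumeral2}_{2})$, where $\uppercase\expandafter{\romannumeral2}_{1}=\sum_{k}\|{}_{0}\partial^{\frac{1-2H_{2}}{2}}_{t}E_{k}\|_{L^{2}((0,T))}^{2}\,\|\phi_{k}-\phi_{R,k}\|_{H^{\frac{1-2H_{1}}{2}}_{0}(D)}^{2}$ and $\uppercase\expandafter{\romannumeral2}_{2}=\sum_{k}\|{}_{0}\partial^{\frac{1-2H_{2}}{2}}_{t}(E_{k}-E_{R,k})\|_{L^{2}((0,T))}^{2}\,\|\phi_{R,k}\|_{H^{\frac{1-2H_{1}}{2}}_{0}(D)}^{2}$.

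For the spatial part $\uppercase\expandafter{\romannumeral2}_{1}$ I would control the temporal factor by Lemma \ref{lemEkest}, giving $\|{}_{0}\partial^{\frac{1-2H_{2}}{2}}_{t}E_{k}\|_{L^{2}((0,T))}\le C\lambda_{k}^{-\sigma_{t}}$ for any $\sigma_{t}<H_{2}/\alpha$, and the spatial factor by the standard $L^{2}$-projection estimate $\|\phi_{k}-\phi_{R,k}\|_{H^{\frac{1-2H_{1}}{2}}_{0}(D)}\le Ch^{\sigma-\frac{1-2H_{1}}{2}}\|\phi_{k}\|_{H^{\sigma}_{0}(D)}\le Ch^{\sigma-\frac{1-2H_{1}}{2}}\lambda_{k}^{\sigma/2}$ for $\sigma\ge\frac{1-2H_{1}}{2}$ \cite{Brenner.2008TMToFEM}; summing over $k$ with the eigenvalue growth of Lemma \ref{thmeigenvalue} converges precisely when $\sigma<\frac{2H_{2}}{\alpha}-\frac{1}{2}$ and yields the factor $h^{2\sigma+2H_{1}-1}$. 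For the temporal part $\uppercase\expandafter{\romannumeral2}_{2}$, the key device is to estimate $\|\phi_{R,k}\|_{H^{\frac{1-2H_{1}}{2}}_{0}(D)}$ not via \eqref{eqphiRkest} but through the inverse inequality on the piecewise-constant space, $\|\phi_{R,k}\|_{H^{\frac{1-2H_{1}}{2}}_{0}(D)}\le Ch^{-\frac{1-2H_{1}}{2}}\|\phi_{R,k}\|_{L^{2}(D)}\le Ch^{H_{1}-\frac{1}{2}}$, which extracts the $h^{2H_{1}-1}$ factor and, crucially, purges the spatial index from the series; the residual temporal approximation error is then handled by combining approximation theory with Lemma \ref{lemEkest}, namely $\|{}_{0}\partial^{\frac{1-2H_{2}}{2}}_{t}(E_{k}-E_{R,k})\|_{L^{2}((0,T))}\le C\tau^{\mu-\frac{1-2H_{2}}{2}}\lambda_{k}^{-\sigma''}$ with $\sigma''<\frac{1-2\mu}{2\alpha}$, and $\sum_{k}\lambda_{k}^{-2\sigma''}$ converges once $\sigma''>\frac{1}{4}$; optimizing $\mu$ under these two constraints delivers the rate $\tau^{2H_{2}-\frac{\alpha}{2}-\epsilon}$. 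Collecting $\uppercase\expandafter{\romannumeral2}_{1}$, $\uppercase\expandafter{\romannumeral2}_{2}$ and applying Gr\"onwall to absorb $\uppercase\expandafter{\romannumeral1}$ gives the claim.

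The step I expect to be hardest is $\uppercase\expandafter{\romannumeral2}_{2}$: obtaining the sharp temporal-approximation bound for the fractional derivative of $E_{k}-E_{R,k}$ with the correct interplay between the $\tau$-power and the $\lambda_{k}$-decay, and recognizing that the inverse inequality (rather than the naive bound $\|\phi_{R,k}\|\approx\|\phi_{k}\|$ of \eqref{eqphiRkest}) is exactly what renders the series summable while isolating the unavoidable negative power $h^{2H_{1}-1}$ produced by the spatial roughness of the noise.
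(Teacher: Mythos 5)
Your proposal is correct and matches the paper's proof in all essentials: the same splitting into a nonlinear part (absorbed by Gr\"onwall) and a stochastic part, the same diagonal decomposition $E_{k}(\phi_{k}-\phi_{R,k})+(E_{k}-E_{R,k})\phi_{R,k}$ with termwise application of Theorem \ref{thmisometry}, Lemma \ref{lemEkest} plus the projection estimate for $\uppercase\expandafter{\romannumeral2}_{1}$, and the inverse inequality plus temporal approximation theory for $\uppercase\expandafter{\romannumeral2}_{2}$ (the paper likewise uses the inverse estimate there, not \eqref{eqphiRkest}). The only difference is bookkeeping: you carry free exponents $\sigma_{t}$, $\mu$, $\sigma''$ and optimize them at the end, whereas the paper inserts the specific weights $\lambda_{k}^{-\frac{1}{2}-\epsilon}$ and $\lambda_{k}^{\frac{1}{4}+\epsilon}$ directly, producing the identical constraints $\sigma<\frac{2H_{2}}{\alpha}-\frac{1}{2}$ and the rate $\tau^{2H_{2}-\frac{\alpha}{2}-\epsilon}h^{2H_{1}-1}$.
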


\begin{proof}
	According to \eqref{eqsolpresentation} and \eqref{eqregsolpre}, one can get
	\begin{equation*}
		\begin{aligned}
			&\mathbb{E}\|u(t)-u_{R}(t)\|_{L^{2}(D)}^{2}\\
			\leq&C\mathbb{E}\left\|\int_{0}^{t}\mathcal{R}(t-s)(f(u)-f(u_{R}))ds\right\|_{L^{2}(D)}^{2}\\
			&+C\mathbb{E}\left \|\int_{0}^{t}\int_{D}(\mathcal{G}(t-s,x,y)-\mathcal{G}_{R}(t-s,x,y))\xi^{H_{1},H_{2}}(dy,ds)\right \|_{L^{2}(D)}^{2} \\
			\leq&\uppercase\expandafter{\romannumeral1}+\uppercase\expandafter{\romannumeral2}.
		\end{aligned}
	\end{equation*}
	Assumptions \eqref{eqnonassump} and the resolvent estimate lead to
	\begin{equation*}
		\begin{aligned}
			\uppercase\expandafter{\romannumeral1}\leq &C\mathbb{E}\left \|\int_{\Gamma_{\theta,\kappa}}e^{zt}z^{\alpha-1}(z^{\alpha}+A)^{-1}(\tilde{f}(u)-\tilde{f}(u_{R}))dz\right \|_{L^{2}(D)}^{2}\\
			\leq &C\mathbb{E} \left(\int_{0}^{t}\int_{\Gamma_{\theta,\kappa}}|e^{z(t-s)}|\|z^{\alpha-1}(z^{\alpha}+A)^{-1}\||dz|\|f(u(s))-f(u_{R}(s))\|_{L^{2}(D)}ds\right)^{2}\\
			\leq &C \int_{0}^{t}\mathbb{E}\|f(u)-f(u_{R})\|_{L^{2}(D)}^{2}ds\\
			\leq&C\int_{0}^{t}\mathbb{E}\|u-u_{R}\|^{2}_{L^{2}(D)}ds.
		\end{aligned}
	\end{equation*}
	As for $\uppercase\expandafter{\romannumeral2}$, using the following fact
	\begin{equation*}
		\begin{aligned}
			E_{k}(t)\phi_{k}(y)-E_{R,k}(t)\phi_{R,k}(y)=&(E_{k}(t)\phi_{k}(y)-E_{k}(t)\phi_{R,k}(y))\\
			&+(E_{k}(t)\phi_{R,k}(y)-E_{R,k}(t)\phi_{R,k}(y))
		\end{aligned}
	\end{equation*}
	and Theorem \ref{thmisometry}, one can get
	\begin{equation*}
		\begin{aligned}
			\uppercase\expandafter{\romannumeral2}
			\leq&C\sum_{k=1}^{\infty}\|{}_{0}\partial^{\frac{1-2H_{2}}{2}}_{t}E_{k}(t)\|_{L^{2}((0,T))}^{2}\|\phi_{k}(y)-\phi_{R,k}(y)\|^{2}_{H^{\frac{1-2H_{1}}{2}}_{0}({D})}\\
			+&C\sum_{k=1}^{\infty}\|{}_{0}\partial^{\frac{1-2H_{2}}{2}}_{t}(E_{k}(t)-E_{R,k}(t))\|_{L^{2}((0,T))}^{2}\|\phi_{R,k}(y)\|^{2}_{H^{\frac{1-2H_{1}}{2}}_{0}({D})}\\
			\leq&\uppercase\expandafter{\romannumeral2}_{1}+\uppercase\expandafter{\romannumeral2}_{2}.
		\end{aligned}
	\end{equation*}
	Combining Lemma \ref{lemEkest} and standard approximation theory \cite{Brenner.2008TMToFEM}, one has
	\begin{equation*}
		\begin{aligned}
			\uppercase\expandafter{\romannumeral2}_{1}\leq &C\sum_{k=1}^{\infty}\|{}_{0}\partial^{\frac{1-2H_{2}}{2}}_{t}E_{k}(t)\|^{2}_{L^{2}((0,T) )}\|\phi_{k}(y)-\phi_{R,k}(y)\|_{H^{\frac{1-2H_{1}}{2}}_{0}({D})}^{2}\\
			\leq &C\sum_{k=1}^{\infty}\lambda^{-\frac{1}{2}-\epsilon}_{k}\|{}_{0}\partial^{\frac{1-2H_{2}}{2}}_{t}\lambda^{\frac{1+2\epsilon}{4}}_{k}E_{k}(t)\|^{2}_{L^{2}((0,T) )}\|\phi_{k}(y)-\phi_{R,k}(y)\|_{H^{\frac{1-2H_{1}}{2}}_{0}({D})}^{2}\\
			\leq &Ch^{2\sigma+2H_{1}-1}\sum_{k=1}^{\infty}\lambda^{-\frac{1}{2}-\epsilon}_{k}\|{}_{0}\partial^{\frac{1-2H_{2}}{2}}_{t}\lambda^{\frac{1+2\epsilon+2\sigma}{4}}_{k}E_{k}(t)\|^{2}_{L^{2}((0,T) )},
		\end{aligned}
	\end{equation*}
	where we need to require $\frac{1}{2}-\frac{1+2\epsilon+2\sigma}{4}\alpha>\frac{1-2H_{2}}{2}$ and $2\sigma+2H_{1}-1>0$ to preserve the boundedness of $\uppercase\expandafter{\romannumeral2}_{1}$, i.e., $\sigma\in(\frac{1-2H_{1}}{2}
	,\min\{\frac{2H_{2}}{\alpha}-\frac{1}{2},1+\epsilon\})$. Similarly, by the inverse estimate and projection theorem \cite{Brenner.2008TMToFEM}, we can also obtain
	\begin{equation*}
		\begin{aligned}
			\uppercase\expandafter{\romannumeral2}_{2}\leq &C\sum_{k=1}^{\infty}\|{}_{0}\partial^{\frac{1-2H_{2}}{2}}_{t}(E_{k}(t)-E_{R,k}(t))\|^{2}_{L^{2}((0,T) )}\|\phi_{R,k}(y)\|_{H^{\frac{1-2H_{1}}{2}}_{0}({D})}^{2}\\
			\leq &Ch^{2H_{1}-1}\sum_{k=1}^{\infty}\lambda_{k}^{-1/2-2\epsilon}\|{}_{0}\partial^{\frac{1-2H_{2}}{2}}_{t}\lambda_{k}^{1/4+\epsilon}(E_{k}(t)-E_{R,k}(t))\|^{2}_{L^{2}((0,T) )}\\
			\leq &C\tau^{2H_{2}-\frac{\alpha}{2}-\epsilon_{0}}h^{2H_{1}-1}.
		\end{aligned}
	\end{equation*}
	Thus the desired results can be achieved by the Gr\"onwall inequality directly.
\end{proof}

\begin{remark}
	When taking $H_{2}=\frac{1}{2}$ and $\alpha=1$ in Theorem \ref{thmreglarerr}, there exists 
	\begin{equation*}
		\mathbb{E}\|u(t)-u_{R}(t)\|_{L^{2}(D)}^{2}\leq C(h^{2H_{1}-\epsilon}+\tau^{\frac{1}{4}-\epsilon}h^{2H_{1}-1}),
	\end{equation*}
	which can recover the results provided in \cite{Cao.2017Aseewawarn}.
\end{remark}

\section{Numerical scheme and error analysis}\label{sec4}
In this section, we construct the numerical scheme for the above regularized equation \eqref{eqregularizeeq}. Here we use the finite element and backward Euler convolution quadrature methods  to discretize the spatial and temporal operators, respectively. At the same time, we provide the corresponding error analysis.
\subsection{Numerical scheme}
Let $\mathcal{T}_{h}$ be a shape regular quasi-uniform partition of the domain $D$. Introduce $X_{h}\subset H^{1}_{0}(D)$ as the space of continuous piecewise linear function over the $\mathcal{T}_{h}$.  Define the $L^{2}$ orthogonal projection $P_{h}:~L^{2}(D)\rightarrow X_{h}$:
\begin{equation*}
	\begin{aligned}
		&(P_{h}u,v_{h})=(u,v_{h})\quad \forall u\in L^{2}(D)~~\forall v_{h}\in X_{h},\\
	\end{aligned}
\end{equation*}
Denote $A_{h}$ as $(A_{h}u_{h},v_{h})=	(\nabla u_{h},\nabla v_{h})$ with $ u_{h}, v_{h}\in X_{h}$.
The semi-discrete Galerkin scheme for \eqref{eqregularizeeq} can be written as: find $u_{h}\in X_{h}$ such that
\begin{equation*}
	(\partial_{t}u_{h},v_{h})+({}_{0}\partial ^{1-\alpha}_{t}A_{h}u_{h},v_{h})=(f(u_{h}),v_{h})+(\xi^{H_{1},H_{2}}_{R},v_{h})
\end{equation*}
with $u_{h}(0)=0$. It can also be written as
\begin{equation}\label{eqsemischm1}
	\partial_{t}u_{h}+{}_{0}\partial ^{1-\alpha}_{t}A_{h}u_{h}=P_{h}f(u_{h})+P_{h}\xi^{H_{1},H_{2}}_{R}.
\end{equation}
Then we use the backward Euler convolution quadrature method to discretize the temporal derivative operator, i.e., the fully discrete scheme is
\begin{equation}\label{eqfullscheme}
	\frac{u^{n}_{h}-u^{n-1}_{h}}{\tau}+\sum_{i=0}^{n-1}d^{(1-\alpha)}_{i}A_{h}u^{n-i}_{h}=P_{h}f(u^{n-1}_{h})+P_{h}\xi^{H_{1},H_{2}}_{R,n},
\end{equation}
where $\xi^{H_{1},H_{2}}_{R,n}=\xi^{H_{1},H_{2}}_{R}(t_{n})$ and
\begin{equation}\label{eqfullschm}
	\sum_{i=0}^{\infty}d^{(\alpha)}_{i}\zeta^{i}=(\delta_{\tau}(\zeta))^{\alpha},\quad \delta_{\tau}(\zeta)=\frac{1-\zeta}{\tau}.
\end{equation}
Next, we provide the temporal and spatial error analyses, respectively.

\subsection{Spatial error analysis}
Now, we suppose that $\bar{u}_{h}\in X_{h}$ is the solution of the equation
\begin{equation}\label{eqbaru0}
	(\partial_{t}\bar{u}_{h},v_{h})+({}_{0}\partial ^{1-\alpha}_{t}A_{h}\bar{u}_{h},v_{h})=(f(\bar{u}_{h}),v_{h})+(\xi^{H_{1},H_{2}}_{RS},v_{h})
\end{equation}
with $u_{h}(0)=0$ and
\begin{equation*}
	\xi^{H_{1},H_{2}}_{RS}(t)=\sum_{j=0}^{N-1}\left(\frac{1}{h}\int_{D_{j}}\xi^{H_{1},H_{2}}(dy,t)\right)\chi_{D_{j}}(x).
\end{equation*}
Then Eq. \eqref{eqbaru0} can  also be written as
\begin{equation*}
	\partial_{t}\bar{u}_{h}+{}_{0}\partial ^{1-\alpha}_{t}A_{h}\bar{u}_{h}=P_{h}f(\bar{u}_{h})+P_{h}\xi^{H_{1},H_{2}}_{RS}.
\end{equation*}
Introduce the Laplace transform of the operator $\mathcal{R}_{h}$ as
\begin{equation*}
	\tilde{\mathcal{R}}_{h}(z)=z^{\alpha-1}(z^{\alpha}+A_{h})^{-1},
\end{equation*}
which leads to
\begin{equation}\label{eqsemischsol}
	\bar{u}_{h}(t)=\int_{0}^{t}\mathcal{R}_{h}(t-s)P_{h}f(\bar{u}_{h}(s))ds+\int_{0}^{t}\mathcal{R}_{h}(t-s)P_{h}\xi_{RS}^{H_{1},H_{2}}(ds).
\end{equation}
Let
\begin{equation}\label{eqdefGRh}
	\mathcal{G}_{R,h}(t,x,y)=\sum_{k=1}^{\infty}\mathcal{G}_{R,h,k}(t,x,y)
\end{equation}
with
\begin{equation}\label{eqdefGRhk}
	\mathcal{G}_{R,h,k}(t,x,y)=\mathcal{R}_{h}(t)P_{h}\phi_{k}(x)\phi_{R,k}(y).
\end{equation}
Thus by \eqref{eqsemischsol}, the solution $\bar{u}_{h}$ can also be written as
\begin{equation}\label{eqsemisol}
	\bar{u}_{h}(t)=\int_{0}^{t}\mathcal{R}_{h}(t-s)P_{h}f(\bar{u}_{h}(s))ds+\int_{0}^{t}\int_{D}\mathcal{G}_{R,h}(t-s,x,y)\xi^{H_{1},H_{2}}(dy,ds).
\end{equation}
Then we provide the properties of $\bar{u}_{h}(t)$.

\begin{theorem}\label{thmsobobaru}
	Let $\bar{u}_{h}(t)$ be the solution of Eq. \eqref{eqbaru0} and $f(u)$ satisfy \eqref{eqnonassump}. Assume $2H_{2}+(H_{1}-1)\alpha>0$. Then we have
	\begin{equation*}
		\mathbb{E}\|\bar{u}_{h}(t)\|_{L^2(D)}^{2}\leq C
	\end{equation*}
	and
	\begin{equation}
		\mathbb{E}\left \|\frac{\bar{u}_{h}(t)-\bar{u}_{h}(t-\tau)}{\tau^{\gamma}}\right \|_{L^{2}(D)}^{2}\leq C,
	\end{equation}
	where $\gamma\in(0,2H_{2}+(H_{1}-1)\alpha)$.
\end{theorem}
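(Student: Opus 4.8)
The plan is to follow the template already established in the proofs of Theorems \ref{thmsobo}, \ref{thmholder}, and \ref{thmurB}, starting from the mild representation \eqref{eqsemisol} and splitting $\mathbb{E}\|\bar{u}_h(t)\|_{L^2(D)}^2$ into a deterministic part $\mathrm{I}$ coming from $\int_0^t\mathcal{R}_h(t-s)P_hf(\bar u_h)ds$ and a stochastic part $\mathrm{II}$ coming from the stochastic convolution. The only genuinely new feature is that the continuous solution operator $\mathcal{R}$ is replaced by its finite element analogue $\mathcal{R}_h$, so every estimate must be carried out with constants uniform in $h$. The decisive preparatory step is to re-expand the kernel in the $L^2$-orthonormal eigenbasis $\{\phi_{h,m}\}$ of $A_h$ with eigenvalues $\lambda_{h,m}$: using $(P_h\phi_k,\phi_{h,m})=(\phi_k,\phi_{h,m})$ and completeness of $\{\phi_k\}$, one checks that $\sum_k(\phi_k,\phi_{h,m})\phi_{R,k}=Q_h\phi_{h,m}$, where $Q_h$ is the piecewise-averaging operator defining $\phi_{R,k}$, so that $\mathcal{G}_{R,h}(t,x,y)=\sum_m E_{h,m}(t)\phi_{h,m}(x)(Q_h\phi_{h,m})(y)$ with $E_{h,m}$ the discrete analogue of $E_k$.

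For the first bound, $\mathrm{I}$ is handled exactly as in Theorem \ref{thmurB}: the uniform resolvent estimate $\|(z^\alpha+A_h)^{-1}\|\le C|z|^{-\alpha}$, $\|P_h\|\le 1$, Cauchy--Schwarz, and the assumptions \eqref{eqnonassump} give $\mathrm{I}\le C(1+\int_0^t(t-s)^{1-\epsilon}\mathbb{E}\|\bar u_h\|_{L^2(D)}^2\,ds)$. For $\mathrm{II}$, writing the stochastic convolution as $\sum_m X_m\phi_{h,m}(x)$ with scalar $X_m=\int_0^t\int_D E_{h,m}(t-s)(Q_h\phi_{h,m})(y)\xi^{H_1,H_2}(dy,ds)$, the $L^2$-orthonormality of $\{\phi_{h,m}\}$ diagonalizes the sum: $\mathrm{II}=\sum_m\mathbb{E}|X_m|^2$. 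Theorem \ref{thmisometry} then bounds each $\mathbb{E}|X_m|^2$ by $\|{}_0\partial_t^{(1-2H_2)/2}E_{h,m}\|_{L^2((0,T))}^2\|Q_h\phi_{h,m}\|_{H_0^{(1-2H_1)/2}(D)}^2$. Inserting a power of $\lambda_{h,m}$ as in Lemma \ref{lemEkest} yields $\|{}_0\partial_t^{(1-2H_2)/2}E_{h,m}\|_{L^2}^2\le C\lambda_{h,m}^{-2\rho}$ for any $\rho<H_2/\alpha$, while stability of $Q_h$ together with an inverse estimate on $X_h$ and Remark \ref{Remeq} give $\|Q_h\phi_{h,m}\|_{H_0^{(1-2H_1)/2}(D)}^2\le C\lambda_{h,m}^{(1-2H_1)/2}$. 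Summing over $m$ with the uniform-in-$h$ eigenvalue lower bound (the discrete counterpart of Lemma \ref{thmeigenvalue}) converges provided $2\rho-\frac{1-2H_1}{2}>\frac12$, which upon letting $\rho\uparrow H_2/\alpha$ is exactly the hypothesis $2H_2+(H_1-1)\alpha>0$; hence $\mathrm{II}\le C$ and Grönwall's inequality closes the first estimate.

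For the temporal Hölder bound I would mirror Theorem \ref{thmholder}, splitting $\bar u_h(t)-\bar u_h(t-\tau)$ into its deterministic and stochastic increments. The deterministic increment is controlled through $|(e^{z\tau}-1)/\tau^\gamma|\le C|z|^\gamma$ on $\Gamma_{\theta,\kappa}$ for $\gamma\in[0,1)$ together with the first estimate just proved; as in $\mathrm{I}_1,\mathrm{I}_2$ of Theorem \ref{thmholder} this stays bounded for $\gamma<1$. The stochastic increment is split into $\int_0^{t-\tau}$ of the kernel difference $E_{h,m}(t-s)-E_{h,m}(t-\tau-s)$ and $\int_{t-\tau}^t$ of the single kernel; applying the same diagonalization and Theorem \ref{thmisometry}, the factor $(e^{z\tau}-1)/\tau^\gamma$ contributes an extra $|z|^\gamma$ inside the contour integral, and repeating the computation of $\mathrm{II}_1,\mathrm{II}_2$ in Theorem \ref{thmholder}, now with $\lambda_{h,m}$ and the uniform discrete eigenvalue bound, produces integrands $s^{(H_1-1)\alpha+2H_2-1-2\gamma-\alpha\epsilon}$ that are integrable precisely when $\gamma<2H_2+(H_1-1)\alpha$.

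The main obstacle is not the structure of the argument, which is parallel to the continuous case, but securing every constant uniformly in $h$. Concretely, this means the resolvent estimate for $A_h$ must hold on the sector with an $h$-independent constant; the lower bound $\lambda_{h,m}\ge cm^{2/d}$ must hold uniformly in $h$ so that $\sum_m\lambda_{h,m}^{-\beta}$ is bounded independently of the mesh; and the averaging operator $Q_h$ together with the fractional-norm inverse estimate on $X_h$ must be stable in $H_0^{(1-2H_1)/2}(D)$ with mesh-independent constants. Once these three uniformities are in hand, the summability threshold collapses to the single condition $2H_2+(H_1-1)\alpha>0$ in both estimates, exactly as in Theorems \ref{thmsobo} and \ref{thmholder}.
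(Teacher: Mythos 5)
Your proposal is correct, and its skeleton coincides with the paper's: start from the mild form \eqref{eqsemisol}, split into the deterministic part $\mathrm{I}$ and the stochastic convolution $\mathrm{II}$, close $\mathrm{I}$ with Gr\"onwall exactly as in Theorems \ref{thmsobo} and \ref{thmurB}, and obtain the H\"older-in-time bound by repeating the argument of Theorem \ref{thmholder} (the paper literally dispatches this second estimate by citing the stability $\|P_h u\|_{L^2(D)}\le\|u\|_{L^2(D)}$ and ``similar arguments in Theorem \ref{thmholder}''). Where you genuinely diverge is in the treatment of $\mathrm{II}$. The paper stays in the \emph{continuous} eigenbasis $\{\phi_k\}$: it bounds $\mathrm{II}$ by $C\sum_{k}\|{}_{0}\partial^{\frac{1-2H_2}{2}}_{t}E_{k}\|^{2}_{L^{2}((0,T))}\|\phi_{k}\|^{2}_{H^{\frac{1-2H_1}{2}}_{0}(D)}$ using \eqref{eqphiRkest} and Lemma \ref{lemEkest}, thereby recycling the summability already established in Theorem \ref{thmsobo}; this is very terse, and strictly speaking the temporal factor of the $k$-th mode is the $X_h$-valued function $\mathcal{R}_h(\cdot)P_h\phi_k$ rather than a scalar multiple of a fixed profile, so Theorem \ref{thmisometry} (stated for product-form scalar integrands) does not apply mode-by-mode without a further expansion. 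Your re-expansion in the \emph{discrete} eigenbasis, via the identity $\sum_k(\phi_k,\phi_{h,m})\phi_{R,k}=Q_h\phi_{h,m}$, repairs exactly this point: the temporal kernels $E_{h,m}$ become scalar, the $L^2$-orthonormality of $\{\phi_{h,m}\}$ diagonalizes the sum, and the isometry applies cleanly; the price is the discrete spectral machinery you correctly identify (uniform sectorial resolvent bound for $A_h$, the min--max inequality $\lambda_{h,m}\ge\lambda_m$ giving the uniform lower bound, and $H^{\frac{1-2H_1}{2}}$-stability of the averaging operator), and your summability computation indeed collapses to the hypothesis $2H_2+(H_1-1)\alpha>0$, matching the theorem. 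One small correction: the bound $\|Q_h\phi_{h,m}\|^2_{H^{\frac{1-2H_1}{2}}_{0}(D)}\le C\lambda_{h,m}^{\frac{1-2H_1}{2}}$ should come from space interpolation between $\|\phi_{h,m}\|_{L^2(D)}=1$ and $\|\phi_{h,m}\|_{H^1(D)}\le C\lambda_{h,m}^{1/2}$ (equivalently $\|A_h^{s/2}P_hA^{-s/2}\|\le C$, which the paper uses elsewhere), not from an inverse estimate, which would only give the $m$-independent bound $Ch^{-(1-2H_1)}$ and is too crude for the summation over $m$; with that substitution your argument is complete.
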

\begin{proof}
	According to \eqref{eqsemisol}, we have
	\begin{equation*}
		\begin{aligned}
			\mathbb{E}\|\bar{u}_{h}(t)\|_{L^2(D)}^{2}\leq&C\mathbb{E}\left\|\int_{0}^{t}\mathcal{R}_{h}(t-s)P_{h}f(\bar{u}_{h}(s))ds\right \|_{L^2(D)}^{2}\\
			&+C\mathbb{E}\left\|\int_{0}^{t}\int_{D}\mathcal{G}_{R,h}(t-s,x,y)\xi^{H_{1},H_{2}}(dy,ds)\right \|_{L^2(D)}^{2}\\
			\leq&\uppercase\expandafter{\romannumeral1}+\uppercase\expandafter{\romannumeral2}.
		\end{aligned}
	\end{equation*}
	Similar to the derivations of Theorem \ref{thmsobo}, one has
	\begin{equation*}
		\uppercase\expandafter{\romannumeral1}\leq C\left (1+\int_{0}^{t}\mathbb{E}\|\bar{u}_{h}(t)\|_{L^{2}(D)}^{2}ds\right ).
	\end{equation*}\\
	As for $\uppercase\expandafter{\romannumeral2}$, Theorem \ref{thmsobo}, Lemma \ref{lemEkest}, and Eq. \eqref{eqphiRkest} give
	\begin{equation*}
		\begin{aligned}
			\uppercase\expandafter{\romannumeral2}\leq&C\sum_{k=1}^{\infty}\left \|{}_{0}\partial^{\frac{1-2H_{2}}{2}}_{t}E_{k}(t)\right \|_{L^{2}((0,T))}^{2}\|\phi_{k}(y)\|_{H^{\frac{1-2H_{1}}{2}}_{0}({D})}^{2}\\
			\leq& C.
		\end{aligned}
	\end{equation*}
	Thus we can get the first desired result by using the Gr\"{o}nwall inequality.  As for the second estimate, using the stability of $P_{h}$, i.e., $\|P_{h}u\|_{L^{2}(D)}\leq \|u\|_{L^{2}(D)}$ \cite{Thomee.2006GFEMfPP} and the similar arguments in Theorem \ref{thmholder}, we can obtain the desired result.

\end{proof}

\begin{theorem}\label{thm:spa}
	Let $u(t)$ and $\bar{u}_{h}(t)$ be the solutions of Eqs. \eqref{eqretosol} and \eqref{eqbaru0}, respectively. Assume that $f(u)$ satisfies \eqref{eqnonassump} and $2H_{2}+(H_{1}-1)\alpha>0$. Then we have
	\begin{equation*}
		\mathbb{E}\|u(t)-\bar{u}_{h}(t)\|^{2}_{L^{2}(D)}\leq Ch^{2\sigma+2H_{1}-1},
	\end{equation*}
	where $\sigma\in(\frac{1-2H_{1}}{2}
	,\min\{\frac{2H_{2}}{\alpha}-\frac{1}{2},1+\epsilon\})$.
\end{theorem}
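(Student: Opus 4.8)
The plan is to start from the two mild-solution representations, \eqref{eqsolpresentation} for $u(t)$ and \eqref{eqsemisol} for $\bar{u}_{h}(t)$, subtract them, and split the error into a deterministic (drift) contribution and a stochastic (noise) contribution:
\[
\mathbb{E}\|u(t)-\bar{u}_{h}(t)\|_{L^{2}(D)}^{2}\leq C\,\mathrm{I}+C\,\mathrm{II},
\]
where $\mathrm{I}=\mathbb{E}\|\int_{0}^{t}(\mathcal{R}(t-s)f(u)-\mathcal{R}_{h}(t-s)P_{h}f(\bar{u}_{h}))\,ds\|_{L^{2}(D)}^{2}$ and $\mathrm{II}=\mathbb{E}\|\int_{0}^{t}\int_{D}(\mathcal{G}(t-s,x,y)-\mathcal{G}_{R,h}(t-s,x,y))\xi^{H_{1},H_{2}}(dy,ds)\|_{L^{2}(D)}^{2}$. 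This mirrors the splitting used in Theorem \ref{thmreglarerr}, the essential new feature being that here the solution operators $\mathcal{R}$ and $\mathcal{R}_{h}P_{h}$ genuinely differ.

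For the drift term $\mathrm{I}$, I would add and subtract $\mathcal{R}_{h}(t-s)P_{h}f(u)$ to obtain a deterministic finite element error $(\mathcal{R}-\mathcal{R}_{h}P_{h})f(u)$ and a Lipschitz remainder $\mathcal{R}_{h}P_{h}(f(u)-f(\bar{u}_{h}))$. The second piece is controlled by the Lipschitz part of \eqref{eqnonassump} together with the stability of $\mathcal{R}_{h}P_{h}$, producing the Gr\"onwall term $C\int_{0}^{t}\mathbb{E}\|u-\bar{u}_{h}\|_{L^{2}(D)}^{2}\,ds$. The first piece is a standard nonsmooth-data finite element estimate for the subdiffusion solution operator; since $\mathbb{E}\|f(u)\|_{L^{2}(D)}^{2}\leq C$ by \eqref{eqnonassump} and Theorem \ref{thmsobo}, it is of order $h^{2}$ (up to a logarithmic factor), which does not exceed the stochastic rate $h^{2\sigma+2H_{1}-1}$ for the admissible range of $\sigma$.

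The decisive term is $\mathrm{II}$. Recalling $\mathcal{G}=\sum_{k}E_{k}(t)\phi_{k}(x)\phi_{k}(y)$ and $\mathcal{G}_{R,h}=\sum_{k}\mathcal{R}_{h}(t)P_{h}\phi_{k}(x)\,\phi_{R,k}(y)$, I would split the kernel as
\[
\mathcal{G}-\mathcal{G}_{R,h}=\sum_{k}E_{k}(t)\phi_{k}(x)(\phi_{k}(y)-\phi_{R,k}(y))+\sum_{k}((\mathcal{R}(t)-\mathcal{R}_{h}(t)P_{h})\phi_{k})(x)\,\phi_{R,k}(y),
\]
giving $\mathrm{II}\leq C\,\mathrm{II}_{a}+C\,\mathrm{II}_{b}$. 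In $\mathrm{II}_{a}$ the output modes $\phi_{k}(x)$ are orthonormal in $L^{2}(D)$, so computing $\mathbb{E}\|\cdot\|_{L^{2}(D)}^{2}$ decouples the modes and each summand becomes a single product to which Theorem \ref{thmisometry} applies directly; this reproduces exactly the term $\mathrm{II}_{1}$ of Theorem \ref{thmreglarerr} and is therefore already bounded by $Ch^{2\sigma+2H_{1}-1}$ for $\sigma$ in the stated range.

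The main obstacle is $\mathrm{II}_{b}$: the $x$-profiles $(\mathcal{R}(t)-\mathcal{R}_{h}(t)P_{h})\phi_{k}$ are no longer scalar multiples of a single orthonormal eigenfunction, because $\mathcal{R}_{h}$ does not diagonalize in the continuous eigenbasis $\{\phi_{k}\}$, so the clean mode decoupling that trivialized Theorem \ref{thmreglarerr} fails and cross terms appear. I would handle this through a vector-valued (bilinear) version of Theorem \ref{thmisometry} combined with the Cauchy--Schwarz inequality, reducing $\mathrm{II}_{b}$ to
\[
\mathrm{II}_{b}\leq C\Big(\sum_{k}\big\|{}_{0}\partial^{\frac{1-2H_{2}}{2}}_{t}(\mathcal{R}(t)-\mathcal{R}_{h}(t)P_{h})\phi_{k}\big\|_{L^{2}((0,T);L^{2}(D))}\|\phi_{R,k}\|_{H^{\frac{1-2H_{1}}{2}}_{0}(D)}\Big)^{2}.
\]
The inner quantity is the deterministic finite element error of the subdiffusion solution operator applied to an eigenfunction, measured in a fractional-in-time norm dictated by the rough noise. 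Using the resolvent representation of $\mathcal{R}-\mathcal{R}_{h}P_{h}$, the known $\mathcal{O}(h^{2\rho})$ finite element resolvent estimate for data in $\hat{H}^{2\rho}(D)$ with $\|\phi_{k}\|_{\hat{H}^{2\rho}(D)}=\lambda_{k}^{\rho}$, and the uniform bound of Lemma \ref{lemEkest}, I would extract the factor $h^{2\sigma+2H_{1}-1}$ by taking $2\rho=2\sigma+2H_{1}-1$, while $\|\phi_{R,k}\|_{H^{\frac{1-2H_{1}}{2}}_{0}(D)}\leq C\lambda_{k}^{\frac{1-2H_{1}}{4}}$ (Remark \ref{Remeq} and \eqref{eqphiRkest}) together with Weyl's asymptotics (Lemma \ref{thmeigenvalue}) guarantees summability over $k$. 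Exactly as in Theorem \ref{thmreglarerr}, the lower restriction $\sigma>\frac{1-2H_{1}}{2}$ secures the spatial approximation order while the upper restriction $\sigma<\min\{\frac{2H_{2}}{\alpha}-\frac{1}{2},1+\epsilon\}$ secures temporal integrability through Lemma \ref{lemEkest}. Collecting $\mathrm{I}$, $\mathrm{II}_{a}$, and $\mathrm{II}_{b}$ and applying the Gr\"onwall inequality then yields the claim.
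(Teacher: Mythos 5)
Your overall architecture coincides with the paper's: the drift/noise splitting, the add-and-subtract of $\mathcal{R}_{h}P_{h}f(u)$ for the drift (FEM error of order $h^{4}$ plus a Lipschitz/Gr\"onwall term), and the splitting of the noise kernel through the intermediate kernel $\sum_{k}\mathcal{R}(t)\phi_{k}(x)\phi_{R,k}(y)$ (the paper's $\mathcal{G}_{Rs}$), with your $\mathrm{II}_{a}$ treated exactly as the paper's $\mathrm{II}_{1}$. The gap is in $\mathrm{II}_{b}$, and it is genuine. First, your literal bookkeeping diverges: the summand you propose is $h^{2\rho}\|\phi_{k}\|_{\hat{H}^{2\rho}(D)}\|\phi_{R,k}\|_{H^{\frac{1-2H_{1}}{2}}_{0}(D)}\lesssim h^{2\rho}\lambda_{k}^{\rho}\lambda_{k}^{\frac{1-2H_{1}}{4}}$, a series with \emph{positive} powers of $\lambda_{k}$; Lemma \ref{thmeigenvalue} is a lower bound on $\lambda_{k}$ and can only help sum negative powers. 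To gain decay in $k$ you must write $\phi_{k}=\lambda_{k}^{-s}A^{s}\phi_{k}$ and control $\|{}_{0}\partial^{\frac{1-2H_{2}}{2}}_{t}(\mathcal{R}-\mathcal{R}_{h}P_{h})A^{s}\|$; here Lemma \ref{lemEkest} is of no use (it bounds the continuous propagator $E_{k}$, not the FEM error), and the missing ingredient is precisely the paper's interpolation between $\|(\mathcal{R}(t)-\mathcal{R}_{h}(t)P_{h})A^{s}\|\leq Ch^{2-2s}$ and the resolvent decay $|z|^{-\alpha(1-s)}$, which caps the extractable power per mode at $h^{2(H_{2}/\alpha-s)-\epsilon}$ because the fractional-in-time norm must remain square-integrable at $t=0$. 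Second, even granting this ingredient, your square-of-sum reduction is too lossy: summability of the first-power series $\sum_{k}\lambda_{k}^{-s+\frac{1-2H_{1}}{4}}$ forces $s>\frac{3}{4}-\frac{H_{1}}{2}$ (or $s>\frac12$ if one uses the inverse estimate $\|\phi_{R,k}\|_{H^{\frac{1-2H_{1}}{2}}_{0}(D)}\leq Ch^{H_{1}-\frac12}$ instead, as the paper does), so the best available bound is $\mathrm{II}_{b}\leq Ch^{\frac{4H_{2}}{\alpha}+2H_{1}-3-\epsilon}$ --- one full power of $h$ worse than the paper's $Ch^{\frac{4H_{2}}{\alpha}+2H_{1}-2-\epsilon}$. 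This dominates the target $h^{2\sigma+2H_{1}-1}$ only when $\sigma\leq\frac{2H_{2}}{\alpha}-1$, whereas the theorem allows $\sigma$ up to $\min\{\frac{2H_{2}}{\alpha}-\frac12,1+\epsilon\}$. The shortfall is not cosmetic: for $(\alpha,H_{1},H_{2})=(0.5,0.2,0.3)$, a case in the paper's Table \ref{tab:spa}, your constraint $\sigma\leq 0.2$ does not even meet the lower limit $\sigma>0.3$, so your argument proves nothing there.

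In fairness, the cross-term concern that motivated your Cauchy--Schwarz reduction is legitimate: in $\mathrm{II}_{2}$ the paper simply asserts the mode-decoupled bound $\mathrm{II}_{2}\leq C\sum_{k}\|{}_{0}\partial^{\frac{1-2H_{2}}{2}}_{t}(\mathcal{R}-\mathcal{R}_{h}P_{h})\phi_{k}\|^{2}_{L^{2}(0,t;L^{2}(D))}\|\phi_{R,k}\|^{2}_{H^{\frac{1-2H_{1}}{2}}(D)}$ even though the profiles $(\mathcal{R}-\mathcal{R}_{h}P_{h})\phi_{k}$ are not orthogonal, so Theorem \ref{thmisometry} alone does not deliver it; and it is exactly this sum-of-squares structure (allowing the cheap weight $s=\frac14+\epsilon$) that buys the extra power of $h$. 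But flagging that issue does not repair your proof: a proof of the full statement needs either a justification of the decoupling or a cross-term treatment that does not cost a power of $h$, and the square-of-sum reduction costs one.
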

\begin{proof}
	According to \eqref{eqsolpresentation} and \eqref{eqsemischsol}, one obtains
	\begin{equation*}
		\begin{aligned}
			&\mathbb{E}\|u(t)-\bar{u}_{h}(t)\|_{L^{2}(D)}^{2}\\
			\leq&C\mathbb{E}\left \|\int_{0}^{t}\mathcal{R}(t-s)f(u(s))ds-\int_{0}^{t}\mathcal{R}_{h}(t-s)P_{h}f(\bar{u}_{h}(s))ds\right \|_{L^{2}(D)}^{2}\\
			&+C\mathbb{E}\left \|\int_{0}^{t}\int_{D}(\mathcal{G}(t-s,x,y)-\mathcal{G}_{R,h}(t-s,x,y))\xi^{H_{1},H_{2}}(dy,ds)\right \|_{L^{2}(D)}^{2}\\
			\leq& \uppercase\expandafter{\romannumeral1}+\uppercase\expandafter{\romannumeral2}.
		\end{aligned}
	\end{equation*}
	As for $\uppercase\expandafter{\romannumeral1}$, one has
	\begin{equation*}
		\begin{aligned}
			\uppercase\expandafter{\romannumeral1}\leq& \mathbb{E}\left \|\int_{0}^{t}\mathcal{R}(t-s)f(u(s))ds-\int_{0}^{t}\mathcal{R}_{h}(t-s)P_{h}f(u(s))ds\right \|_{L^{2}(D)}^{2}\\
			&+\mathbb{E}\left \|\int_{0}^{t}\mathcal{R}_{h}(t-s)P_{h}f(u(s))ds-\int_{0}^{t}\mathcal{R}_{h}(t-s)P_{h}f(\bar{u}_{h}(s))ds\right \|_{L^{2}(D)}^{2}\\
			\leq&\uppercase\expandafter{\romannumeral1}_{1}+\uppercase\expandafter{\romannumeral1}_{2}.
		\end{aligned}
	\end{equation*}
	Using the assumptions \eqref{eqnonassump}, the fact $\|(\mathcal{R}(t-s)-\mathcal{R}_{h}(t-s)P_{h})\|\leq Ch^{2}|z|^{\alpha-1}$, and Theorem \ref{thmurB}, we have
	\begin{equation*}
		\begin{aligned}
			\uppercase\expandafter{\romannumeral1}_{1}
			\leq&\left( \int_{0}^{t}\|(\mathcal{R}(t-s)-\mathcal{R}_{h}(t-s)P_{h})\|\mathbb{E}\|f(u(s))\|_{L^{2}(D)}ds\right) ^{2}\\
			\leq&Ch^{4}\left( \int_{0}^{t}(t-s)^{-\alpha}(1+\mathbb{E}\|u(s)\|_{L^{2}(D)})ds\right) ^{2}\\
			\leq& Ch^{4}.
		\end{aligned}
	\end{equation*}
	As for $\uppercase\expandafter{\romannumeral1}_{2}$, the stability of $P_{h}$, the assumptions \eqref{eqnonassump}, and the Cauchy-Schwarz inequality imply
	\begin{equation*}
		\begin{aligned}
			\uppercase\expandafter{\romannumeral1}_{2}\leq &C\mathbb{E}\left (\int_{0}^{t}\|\mathcal{R}_{h}(t-s)\|\|(f(u(s))-f(\bar{u}_{h}(s)))\|_{L^{2}(D)}ds\right) ^{2}\\
			\leq &C\int_{0}^{t}\mathbb{E}\|u(s)-\bar{u}_{h}(s)\|_{L^{2}(D)}^{2}ds.
		\end{aligned}
	\end{equation*}
	As for $\uppercase\expandafter{\romannumeral2}$, one has
	\begin{equation*}
		\begin{aligned}
			\uppercase\expandafter{\romannumeral2}\leq &C\mathbb{E}\left \|\int_{0}^{t}\int_{D}(\mathcal{G}(t-s,x,y)-\mathcal{G}_{Rs}(t-s,x,y))\xi^{H_{1},H_{2}}(dy,ds)\right \|_{L^{2}(D)}^{2}\\
			&+C\mathbb{E}\left \|\int_{0}^{t}\int_{D}(\mathcal{G}_{Rs}(t-s,x,y)-\mathcal{G}_{R,h}(t-s,x,y))\xi^{H_{1},H_{2}}(dy,ds)\right \|_{L^{2}(D)}^{2}\\
			\leq&\uppercase\expandafter{\romannumeral2}_{1}+\uppercase\expandafter{\romannumeral2}_{2},
		\end{aligned}
	\end{equation*}
	where
	\begin{equation*}
		\mathcal{G}_{Rs}(t,x,y)=\sum_{k=1}^{\infty}\mathcal{R}(t)\phi_{k}(x)\phi_{R,k}(y).
	\end{equation*}
	Similar to the derivations of $\uppercase\expandafter{\romannumeral2}_{1}$ in the proof of Theorem \ref{thmreglarerr}, we have
	\begin{equation*}
		\uppercase\expandafter{\romannumeral2}_{1}\leq Ch^{2\sigma+2H_{1}-1},
	\end{equation*}
	where $\sigma\in(\frac{1-2H_{1}}{2}
	,\min\{\frac{2H_{2}}{\alpha}-\frac{1}{2},1+\epsilon\})$.
	As for $\uppercase\expandafter{\romannumeral2}_{2}$, the inverse estimate gives
	\begin{equation*}
		\begin{aligned}
			\uppercase\expandafter{\romannumeral2}_{2}\leq&C\sum_{k=1}^{\infty}\left \|{}_{0}\partial^{\frac{1-2H_{2}}{2}}_{t}(\mathcal{R}(t)-\mathcal{R}_{h}(t)P_{h})\phi_{k}(x)\right \|_{L^{2}(0,t,L^{2}(D))}^{2}\|\phi_{R,k}(y)\|_{H^{\frac{1-2H_{1}}{2}}(D)}^{2}\\
			\leq&Ch^{2H_{1}-1}\sum_{k=1}^{\infty}\lambda_{k}^{-\frac{1}{2}-2\epsilon}\left \|{}_{0}\partial^{\frac{1-2H_{2}}{2}}_{t}(\mathcal{R}(t)-\mathcal{R}_{h}(t)P_{h})A^{\frac{1}{4}+\epsilon}\phi_{k}(x)\right\|_{L^{2}(0,t,L^{2}(D))}^{2}.
		\end{aligned}
	\end{equation*}
	Using the fact $\|(\mathcal{R}(t)-\mathcal{R}_{h}(t)P_{h})A^{s}\|\leq Ch^{2-2s}$ for $s\in[0,\frac{1}{2}]$ \cite{Thomee.2006GFEMfPP} and the interpolation property \cite{Adams.2003SS}, one can obtain
	\begin{equation*}
		\begin{aligned}
			&\left \|{}_{0}\partial^{\frac{1-2H_{2}}{2}}_{t}(\mathcal{R}(t)-\mathcal{R}_{h}(t)P_{h})A^{\frac{1}{4}+\epsilon}\phi_{k}(x)\right\|_{L^{2}(0,t,L^{2}(D))}^{2}\\
			\leq&C\int_{0}^{t}\left \|\int_{\Gamma_{\theta,\kappa}}e^{zt}z^{\frac{1-2H_{2}}{2}}z^{\alpha-1}\left ((z^{\alpha}+A)^{-1}-(z^{\alpha}+A_{h})^{-1}P_{h}\right )A^{\frac{1}{4}+\epsilon}\phi_{k}(x)dz\right \|_{L^{2}(D)}^{2}dt\\
			\leq&Ch^{\frac{4H_{2}}{\alpha}-1-\epsilon_{0}}\int_{0}^{t}\left(\int_{\Gamma_{\theta,\kappa}}|e^{zt}||z|^{-\frac{1}{2}-\epsilon}|dz|\right)^{2}dt\\
			\leq&Ch^{\frac{4H_{2}}{\alpha}-1-\epsilon_{0}},
		\end{aligned}
	\end{equation*}
	which leads to
	\begin{equation*}
		\uppercase\expandafter{\romannumeral2}_{2}\leq Ch^{\frac{4H_{2}}{\alpha}+2H_{1}-2-\epsilon}.
	\end{equation*}
	Therefore, we complete the proof.
\end{proof}
\subsection{Temporal error analysis}
In this subsection, we consider the temporal  error estimate, that is, the estimate of $\mathbb{E}\|u^{n}_{h}-\bar{u}_{h}(t_{n})\|_{L^{2}(D)}$.

In the following,  we provide the presentation of $u^{n}_{h}$ first. Multiplying $\zeta^{n}$ on both sides of Eq. \eqref{eqfullscheme} and summing $n$ from $1$ to $\infty$, we have
\begin{equation*}
	\sum_{n=1}^{\infty}\frac{u^{n}_{h}-u^{n-1}_{h}}{\tau}\zeta^{n}+\sum_{n=1}^{\infty}\sum_{i=0}^{n-1}d^{(1-\alpha)}_{i}A_{h}u^{n-i}_{h}\zeta^{n}=\sum_{n=1}^{\infty}P_{h}f(u^{n-1}_{h})\zeta^{n}+\sum_{n=1}^{\infty}P_{h}\xi^{H_{1},H_{2}}_{R,n}\zeta^{n}.
\end{equation*}
Combining the definition of $d^{(1-\alpha)}_{i}$, one has
\begin{equation*}
	\begin{aligned}
		\sum_{n=1}^{\infty}u^{n}_{h}\zeta^{n}=&(\delta_{\tau}(\zeta))^{\alpha-1}((\delta_{\tau}(\zeta))^{\alpha}+A_{h})^{-1}P_{h}\sum_{n=1}^{\infty}f(u^{n-1}_{h})\zeta^{n}\\
		&+(\delta_{\tau}(\zeta))^{\alpha-1}((\delta_{\tau}(\zeta))^{\alpha}+A_{h})^{-1}P_{h}\sum_{n=1}^{\infty}\xi^{H_{1},H_{2}}_{R,n}\zeta^{n}.
	\end{aligned}
\end{equation*}
Using Cauchy's integral formula and doing simple calculations lead to
\begin{equation}\label{eqfullschsol}
	\begin{aligned}
		u^{n}_{h}=&\frac{\tau}{2\pi\mathbf{i}}\int_{\Gamma_{\theta,\kappa}^{\tau}}e^{zt_{n}}(\delta_{\tau}(e^{-z\tau}))^{\alpha-1}((\delta_{\tau}(e^{-z\tau}))^{\alpha}+A_{h})^{-1}P_{h}\sum_{j=1}^{\infty}f(u^{j-1}_{h})e^{-zj\tau}dz\\
		&+\frac{\tau}{2\pi\mathbf{i}}\int_{\Gamma_{\theta,\kappa}^{\tau}}e^{zt_{n}}(\delta_{\tau}(e^{-z\tau}))^{\alpha-1}((\delta_{\tau}(e^{-z\tau}))^{\alpha}+A_{h})^{-1}P_{h}\sum_{j=1}^{\infty}\xi^{H_{1},H_{2}}_{R,j}e^{-zj\tau}dz,\\
	\end{aligned}
\end{equation}
where $\Gamma^\tau_{\theta,\kappa}=\{z\in \mathbb{C}:\kappa\leq |z|\leq\frac{\pi}{\tau\sin(\theta)},|\arg z|=\theta\}\cup\{z\in \mathbb{C}:|z|=\kappa,|\arg z|\leq\theta\}$. According to the definition of $\xi^{H_{1},H_{2}}_{R}$, we can obtain
\begin{equation*}
	\sum_{n=1}^{\infty}\xi^{H_{1},H_{2}}_{R,n}e^{-zt_{n}}=\frac{z}{e^{z\tau}-1}\tilde{\xi}^{H_{1},H_{2}}_{R}.
\end{equation*}
Introduce $\bar{F}(t)$ as
\begin{equation*}
	\bar{F}(t)=\left\{
	\begin{aligned}
		&0\qquad\qquad~~ t=t_{0},\\
		&f(u_{h}^{j-1})\qquad t\in(t_{j-1},t_{j}],
	\end{aligned}
	\right.
\end{equation*}
and $F(t)=f(u_{h}(t))$. In what  follows, we abbreviate $P_{h}F(t)$ and $P_{h}\bar{F}(t)$ as $F_{h}(t)$ and $\bar{F}_{h}(t)$.

Simple calculations lead to
\begin{equation*}
	\sum_{n=1}^{\infty}\bar{F}_{h}(t_{n})e^{-zt_{n}}=\frac{z}{e^{z\tau}-1}\tilde{\bar{F}}_{h}(z).
\end{equation*}
Thus there holds
\begin{equation*}
	\begin{aligned}
		u^{n}_{h}=&\int_{0}^{t_{n}}\bar{\mathcal{R}}_{h}(t_{n}-s)\bar{F}_{h}(s)ds\\
		&+\int_{0}^{t_{n}}\int_{D}\bar{\mathcal{G}}_{R,h}(t-s,x,y)\xi^{H_{1},H_{2}}(dy,s)ds,
	\end{aligned}
\end{equation*}
where
\begin{equation}\label{eqdefbGRh}
	\bar{\mathcal{G}}_{R,h}(t,x,y)=\sum_{k=1}^{\infty}\bar{\mathcal{G}}_{R,h,k}(t,x,y),\qquad \bar{\mathcal{G}}_{R,h,k}(t,x,y)=\bar{\mathcal{R}}_{h}(t)P_{h}\phi_{k}(x)\phi_{R,k}(y),
\end{equation}
and
\begin{equation}\label{eqdefbRh}
	\bar{\mathcal{R}}_{h}(t)=\frac{1}{2\pi\mathbf{i}}\int_{\Gamma_{\theta,\kappa}^{\tau}}e^{zt}(\delta_{\tau}(e^{-z\tau}))^{\alpha-1}((\delta_{\tau}(e^{-z\tau}))^{\alpha}+A_{h})^{-1}\frac{z\tau}{e^{z\tau}-1}dz.
\end{equation}
Let $\{\lambda_{k,h},\phi_{k,h}\}_{k=1}^{N}$ be the eigenvalues and eigenfunctions of operator $A_{h}$.
Thus $\bar{\mathcal{R}}_{h}(t)$ can also be written as, for $u_{h}\in X_{h}$,
\begin{equation*}
	\bar{\mathcal{R}}_{h}(t)u_{h}=\sum_{k=1}^{N}E_{k,h}(t)(u_{h},\phi_{k,h})\phi_{k,h},
\end{equation*}
where
\begin{equation}\label{eqdefEKH}
	E_{k,h}(t)=\frac{1}{2\pi\mathbf{i}}\int_{\Gamma_{\theta,\kappa}^{\tau}}e^{zt}(\delta_{\tau}(e^{-z\tau}))^{\alpha-1}((\delta_{\tau}(e^{-z\tau}))^{\alpha}+\lambda_{k,h})^{-1}\frac{z\tau}{e^{z\tau}-1}dz.
\end{equation}
Similar to the proof of Lemma \ref{lemEkest}, one can obtain
\begin{lemma}\label{lemEkhest}
	Let $E_{k,h}(t)$ be defined in \eqref{eqdefEKH}. For $k=1,2,\ldots,N$, $\sigma\in[0,1]$, and $\gamma\geq 0$, if $\gamma+\sigma\alpha<\frac{1}{2}$, then there exists a uniform constant $C$ such that
	\begin{equation}
		\|\lambda^{\sigma}_{k,h}E_{k,h}(t)\|_{H^{\gamma}((0,T))}=\|{}_{0}\partial^{\gamma}_{t}\lambda^{\sigma}_{k,h}E_{k,h}(t)\|_{L^{2}((0,T))}\leq C.
	\end{equation}
\end{lemma}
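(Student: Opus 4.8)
The plan is to repeat, essentially verbatim, the argument used for Lemma~\ref{lemEkest}, the only new ingredient being that every estimate must now be shown to hold uniformly in the time step $\tau$ as well as in $k$. As in the continuous case, I would begin from the contour representation \eqref{eqdefEKH} and use the fact that the Riemann--Liouville derivative acts on a Laplace-type kernel as multiplication by $z^{\gamma}$, so that
\begin{equation*}
	\lambda^{\sigma}_{k,h}\,{}_{0}\partial^{\gamma}_{t}E_{k,h}(t)=\frac{1}{2\pi\mathbf{i}}\int_{\Gamma_{\theta,\kappa}^{\tau}}e^{zt}z^{\gamma}(\delta_{\tau}(e^{-z\tau}))^{\alpha-1}\lambda^{\sigma}_{k,h}((\delta_{\tau}(e^{-z\tau}))^{\alpha}+\lambda_{k,h})^{-1}\frac{z\tau}{e^{z\tau}-1}\,dz.
\end{equation*}
The goal is then to bound the integrand so that, after integrating along $\Gamma_{\theta,\kappa}^{\tau}$, one recovers the same $t^{-(\gamma+\sigma\alpha)}$ decay obtained in Lemma~\ref{lemEkest}.

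The key technical step is to transfer the continuous resolvent and interpolation estimates to the discrete symbol $\delta_{\tau}(e^{-z\tau})$. Here I would invoke the standard properties of the backward Euler convolution quadrature \cite{Lubich.1996Ndeefaoaeewapmt}: for $z\in\Gamma_{\theta,\kappa}^{\tau}$ one has $c|z|\leq|\delta_{\tau}(e^{-z\tau})|\leq C|z|$ and $\delta_{\tau}(e^{-z\tau})$ remains in a fixed sector $\Sigma_{\theta'}$, both uniformly in $\tau$, while the truncation $|z|\leq\pi/(\tau\sin\theta)$ guarantees $\left|\frac{z\tau}{e^{z\tau}-1}\right|\leq C$ on the contour. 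Combining these with the resolvent bound $|((\delta_{\tau}(e^{-z\tau}))^{\alpha}+\lambda_{k,h})^{-1}|\leq C|z|^{-\alpha}$ and the interpolation inequality $\lambda^{\sigma}_{k,h}|((\delta_{\tau}(e^{-z\tau}))^{\alpha}+\lambda_{k,h})^{-1}|\leq C|z|^{-\alpha(1-\sigma)}$, the modulus of the integrand is controlled by $C|e^{zt}||z|^{\gamma+\sigma\alpha-1}$, exactly the bound appearing in the proof of Lemma~\ref{lemEkest}, with a constant $C$ independent of both $k$ and $\tau$.

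It then remains to estimate $\int_{\Gamma_{\theta,\kappa}^{\tau}}|e^{zt}||z|^{\gamma+\sigma\alpha-1}|dz|$. Parametrizing the rays and the arc and using the $\tau$-uniform contour estimate for convolution quadrature gives a bound of order $t^{-(\gamma+\sigma\alpha)}$, uniformly in $\tau$, so that $\|\lambda^{\sigma}_{k,h}\,{}_{0}\partial^{\gamma}_{t}E_{k,h}(t)\|_{L^{2}((0,T))}^{2}\leq C\int_{0}^{T}t^{-2\gamma-2\sigma\alpha}\,dt$, which is finite precisely when $\gamma+\sigma\alpha<\frac{1}{2}$; since none of the constants depend on $k$ or $\tau$, this yields the claimed uniform bound $C$. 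I expect the main obstacle to be verifying that the sectorial and equivalence estimates for $\delta_{\tau}(e^{-z\tau})$, together with the contour integral bound, really do hold uniformly in $\tau$ on the truncated contour $\Gamma_{\theta,\kappa}^{\tau}$; once these convolution-quadrature facts are in place, the remainder of the argument is identical to the continuous case.
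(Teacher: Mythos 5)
Your proposal is correct and follows essentially the same route as the paper: the paper's "proof" of Lemma~\ref{lemEkhest} consists precisely of the remark that one repeats the argument of Lemma~\ref{lemEkest}, replacing the continuous symbol $z$ by the discrete symbol $\delta_{\tau}(e^{-z\tau})$ on the truncated contour $\Gamma^{\tau}_{\theta,\kappa}$ and using the standard $\tau$-uniform convolution-quadrature facts ($c|z|\leq|\delta_{\tau}(e^{-z\tau})|\leq C|z|$, sectoriality, and $|z\tau/(e^{z\tau}-1)|\leq C$), exactly as you outline. Your write-up in fact supplies more of the uniformity details than the paper does, and your final bound $\int_{0}^{T}t^{-2\gamma-2\sigma\alpha}\,dt<\infty$ under $\gamma+\sigma\alpha<\frac{1}{2}$ matches the paper's conclusion.
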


In the rest of paper, we take $\kappa\leq \frac{\pi}{t_{n}\sin(\theta)}$. Then we provide the temporal error
estimate.
\begin{theorem}\label{thm:tem}
	Let $\bar{u}_{h}$ and $u_{h}^{n}$ be the solution of \eqref{eqbaru0} and \eqref{eqfullscheme}, respectively. Let $f(u)$ satisfy  the assumptions \eqref{eqnonassump}. Assume $2H_{2}+(H_{1}-1)\alpha>0$. Then we have
	\begin{equation*}
		\mathbb{E}\|\bar{u}_{h}(t_{n})-u^{n}_{h}\|_{L^{2}(D)}^{2}\leq C\tau^{2H_{2}+(H_{1}-1)\alpha-\epsilon}.
	\end{equation*}
\end{theorem}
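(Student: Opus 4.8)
The plan is to subtract the integral representation \eqref{eqsemisol} of $\bar{u}_h(t_n)$ from the representation of $u^n_h$ obtained from \eqref{eqfullschsol}, and to split the difference $e^n=\bar{u}_h(t_n)-u^n_h$ into a part carrying the nonlinearity and a part carrying the noise. The nonlinear part collects $\int_0^{t_n}\mathcal{R}_h(t_n-s)P_hf(\bar{u}_h(s))\,ds-\int_0^{t_n}\bar{\mathcal{R}}_h(t_n-s)\bar{F}_h(s)\,ds$, while the stochastic part collects the difference of the stochastic convolutions built from $\mathcal{G}_{R,h}$ and $\bar{\mathcal{G}}_{R,h}$. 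Since both kernels share the same spatially regularized factor $\phi_{R,k}(y)$, their difference is driven purely by $\mathcal{R}_h-\bar{\mathcal{R}}_h$, i.e.\ by the temporal convolution-quadrature approximation. Because the target exponent $2H_2+(H_1-1)\alpha$ is strictly below $1$ under the standing assumption, I expect the nonlinear part to be subdominant and the stochastic part to dictate the rate.

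For the nonlinear part I would insert $\pm\int_0^{t_n}\bar{\mathcal{R}}_h(t_n-s)P_hf(\bar{u}_h(s))\,ds$ to separate three effects: the backward Euler resolvent error, controlled by $\|\mathcal{R}_h(t)-\bar{\mathcal{R}}_h(t)\|\le C\tau t^{-1}$ together with the growth bound $\mathbb{E}\|f(\bar{u}_h)\|_{L^2(D)}\le C$ from Theorem \ref{thmsobobaru}; the time-lag error from replacing $f(\bar{u}_h(s))$ by the piecewise-constant $\bar F$, controlled by the Lipschitz bound \eqref{eqnonassump} and the $\tau$-H\"older continuity of $\bar{u}_h$ in Theorem \ref{thmsobobaru}; and the genuinely recursive term $\int_0^{t_n}\bar{\mathcal{R}}_h(t_n-s)P_h(f(\bar{u}_h)-f(u_h))\,ds$, which after \eqref{eqnonassump} feeds a discrete Gr\"onwall inequality in $\mathbb{E}\|e^j\|^2_{L^2(D)}$.

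The core is the stochastic part. Applying Theorem \ref{thmisometry} to the difference of stochastic convolutions and using that the kernels factor as $\mathcal{R}_h(t)P_h\phi_k(x)\phi_{R,k}(y)$ and $\bar{\mathcal{R}}_h(t)P_h\phi_k(x)\phi_{R,k}(y)$, I reduce the mean-square error to
\[
\mathrm{II}\le C\sum_{k=1}^{\infty}\left\|{}_0\partial_t^{\frac{1-2H_2}{2}}(\mathcal{R}_h(t)-\bar{\mathcal{R}}_h(t))P_h\phi_k\right\|_{L^2(0,t_n;L^2(D))}^2\|\phi_{R,k}\|^2_{H^{\frac{1-2H_1}{2}}_0(D)}.
\]
I would then write $\mathcal{R}_h-\bar{\mathcal{R}}_h$ as a contour integral of the difference of the symbols $\tilde{\mathcal{R}}_h(z)=z^{\alpha-1}(z^\alpha+A_h)^{-1}$ and the backward Euler symbol in \eqref{eqdefbRh}, on which the fractional derivative acts as multiplication by $z^{\frac{1-2H_2}{2}}$. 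The standard quadrature estimates $|\delta_\tau(e^{-z\tau})-z|\le C\tau|z|^2$, $\left|\frac{z\tau}{e^{z\tau}-1}-1\right|\le C\tau|z|$, and $c|z|\le|\delta_\tau(e^{-z\tau})|\le C|z|$ on $\Gamma^\tau_{\theta,\kappa}$ show the two symbols differ by a factor of order $\tau|z|$. Since $\tau|z|\lesssim1$ on the truncated contour, I would interpolate this first-order factor to a fractional one, $\tau|z|\le C\tau^{\gamma}|z|^{\gamma}$, pulling $\tau^{\gamma}$ out of the norm.

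The main obstacle is to take $\gamma$ as large as possible while keeping both the contour integral and the $k$-sum convergent. After interpolation the integrand carries the weight $|z|^{\gamma+\frac{1-2H_2}{2}+\alpha-1}$ against $(z^\alpha+\lambda_{k,h})^{-1}$; inserting $A_h^{1/4+\epsilon}$ to manufacture the summable factor $\lambda_{k,h}^{-1/2-2\epsilon}$ and bounding $\|\phi_{R,k}\|^2_{H^{(1-2H_1)/2}_0(D)}\le C\lambda_k^{(1-2H_1)/2}$ via Remark \ref{Remeq} and \eqref{eqphiRkest}, the contour integral---estimated exactly as in Lemmas \ref{lemEkest} and \ref{lemEkhest}---converges precisely when $(\gamma+\tfrac{1-H_1}{2}\alpha+\tfrac{1-2H_2}{2})-1<-\tfrac12$, that is $2\gamma<2H_2+(H_1-1)\alpha$. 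Taking $\gamma$ up to this threshold and squaring gives $\mathrm{II}\le C\tau^{2\gamma}=C\tau^{2H_2+(H_1-1)\alpha-\epsilon}$, with the $k$-sum converging by Lemma \ref{thmeigenvalue}. This threshold coincides with the temporal H\"older exponent of Theorem \ref{thmholder} and is the one genuinely delicate point in the argument; the contour-truncation remainder is of higher order in $\tau$, the nonlinear contributions are controlled above, and a final discrete Gr\"onwall inequality closes the estimate to yield the stated bound.
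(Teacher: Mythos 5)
Your treatment of the nonlinear term and of the convolution-quadrature kernel error is sound and essentially parallels two of the three pieces in the paper's proof (its $\vartheta_{1}$ and $\vartheta_{2}$); in particular, your interpolation $\tau|z|\le C\tau^{\gamma}|z|^{\gamma}$ on the truncated contour, with the threshold $2\gamma<2H_{2}+(H_{1}-1)\alpha$, is a legitimate variant of the paper's direct estimate of $\vartheta_{2}$. However, your decomposition of the stochastic part is incomplete, and the missing piece is not a higher-order remainder.

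The gap is in the claim that, since $\mathcal{G}_{R,h}$ and $\bar{\mathcal{G}}_{R,h}$ share the factor $\phi_{R,k}(y)$, the difference of the stochastic convolutions ``is driven purely by $\mathcal{R}_{h}-\bar{\mathcal{R}}_{h}$.'' This presumes that both convolutions are integrals against the same noise $\xi^{H_{1},H_{2}}(dy,ds)$. They are not. The semidiscrete solution $\bar{u}_{h}$ in \eqref{eqsemisol} involves the genuine stochastic integral in time, $\int_{0}^{t}\int_{D}\mathcal{G}_{R,h}(t-s,x,y)\,\xi^{H_{1},H_{2}}(dy,ds)$, whereas the fully discrete scheme \eqref{eqfullscheme} is driven by $\xi^{H_{1},H_{2}}_{R,n}$, i.e.\ by noise increments averaged over the time intervals $I_{i}$; its representation therefore carries $\int_{0}^{t_{n}}\int_{D}\bar{\mathcal{G}}_{R,h}(t_{n}-s,x,y)\,\xi^{H_{1},H_{2}}(dy,s)\,ds$, a Lebesgue integral in time of the temporally regularized noise. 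Equivalently, if one rewrites the discrete stochastic convolution as an integral against the true noise, the kernel that appears is not $\bar{\mathcal{R}}_{h}(t_{n}-s)$ but its piecewise time average $\frac{1}{\tau}\sum_{i}\chi_{(t_{i-1},t_{i}]}(s)\int_{t_{i-1}}^{t_{i}}\bar{\mathcal{R}}_{h}(t_{n}-r)\,dr$. Hence, besides your term $\mathrm{II}$, the error contains
\[
\vartheta_{3}=C\,\mathbb{E}\left\|\int_{0}^{t_{n}}\int_{D}\bar{\mathcal{G}}_{R,h}(t_{n}-s,x,y)\left(\xi^{H_{1},H_{2}}(dy,ds)-\xi^{H_{1},H_{2}}(dy,s)\,ds\right)\right\|_{L^{2}(D)}^{2},
\]
which the paper bounds by applying Theorem \ref{thmisometry} to the averaging error ${}_{0}\partial^{\frac{1-2H_{2}}{2}}_{s}\bigl(\bar{\mathcal{R}}_{h}(s)-\frac{1}{\tau}\sum_{i}\chi_{[t_{i-1},t_{i})}(s)\int_{t_{i-1}}^{t_{i}}\bar{\mathcal{R}}_{h}(r)\,dr\bigr)A_{h}^{\frac{1-H_{1}}{2}+\epsilon}$, using Lemma \ref{lemEkhest} and the approximation-theory argument from Theorem \ref{thmreglarerr}. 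This term is the temporal Wong--Zakai effect internal to the fully discrete scheme, distinct from the convolution-quadrature error you do estimate, and it contributes exactly at the rate $\tau^{2H_{2}+(H_{1}-1)\alpha-\epsilon}$, i.e.\ at the same order as the leading terms. Omitting it leaves the subtraction of the two representations unjustified and the proof incomplete.
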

\begin{proof}
	Subtracting \eqref{eqfullschsol} from \eqref{eqsemisol}, we have
	\begin{equation*}
		\begin{aligned}
			&\mathbb{E}\|\bar{u}_{h}(t_{n})-u^{n}_{h}\|_{L^{2}(D)}^{2}\\
			\leq&C\mathbb{E}\left \|\int_{0}^{t_{n}}\mathcal{R}_{h}(t_{n}-s)F(s)-\bar{\mathcal{R}}_{h}(t_{n}-s)\bar{F}_{h}(s)ds\right \|_{L^{2}(D)}^{2}\\
			&+C\mathbb{E}\Bigg \|\int_{0}^{t_{n}}\int_{D}\mathcal{G}_{R,h}(t_{n}-s,x,y)\xi^{H_{1},H_{2}}(dy,ds)\\
			&\qquad\qquad-\int_{0}^{t_{n}}\int_{D}\bar{\mathcal{G}}_{R,h}(t_{n}-s,x,y)\xi^{H_{1},H_{2}}(dy,s)ds\Bigg \|_{L^{2}(D)}^{2}\\
			\leq&C\mathbb{E}\left \|\int_{0}^{t_{n}}\mathcal{R}_{h}(t_{n}-s)F(s)-\bar{\mathcal{R}}_{h}(t_{n}-s)\bar{F}_{h}(s)ds\right \|_{L^{2}(D)}^{2}\\
			&+C\mathbb{E}\left \|\int_{0}^{t_{n}}\int_{D}(\mathcal{G}_{R,h}(t_{n}-s,x,y)-\bar{\mathcal{G}}_{R,h}(t_{n}-s,x,y))\xi^{H_{1},H_{2}}(dy,ds)\right \|_{L^{2}(D)}^{2}\\
			&+C\mathbb{E}\Bigg \|\int_{0}^{t_{n}}\int_{D}\bar{\mathcal{G}}_{R,h}(t_{n}-s,x,y)(\xi^{H_{1},H_{2}}(dy,ds)-\xi^{H_{1},H_{2}}(dy,s)ds)\Bigg \|_{L^{2}(D)}^{2}\\
			\leq&\vartheta_{1}+\vartheta_{2}+\vartheta_{3}.
		\end{aligned}
	\end{equation*}	
	For $\vartheta_{1}$, one can split it into three parts, i.e.,
	\begin{equation*}
		\begin{aligned}
			\vartheta_{1}\leq& C\mathbb{E}\left \|\sum_{i=1}^{n}\int_{t_{i-1}}^{t_{i}}\mathcal{R}_{h}(t_{n}-s)(F_{h}(s)-F_{h}(t_{i-1}))ds\right \|_{L^{2}(D)}^{2}\\
			&+C\mathbb{E}\left \|\sum_{i=1}^{n}\int_{t_{i-1}}^{t_{i}}(\mathcal{R}_{h}(t_{n}-s)-\bar{\mathcal{R}}_{h}(t_{n}-s))F_{h}(t_{i-1})ds\right \|_{L^{2}(D)}^{2}\\
			& +C\mathbb{E}\left \|\sum_{i=1}^{n}\int_{t_{i-1}}^{t_{i}}\bar{\mathcal{R}}_{h}(t_{n}-s)(F_{h}(t_{i-1})-\bar{F}_{h}(s))ds\right \|_{L^{2}(D)}^{2}\\
			\leq& \vartheta_{1,1}+\vartheta_{1,2}+\vartheta_{1,3}.
		\end{aligned}
	\end{equation*}
	Using Theorem \ref{thmsobobaru}, one has
	\begin{equation*}
		\vartheta_{1,1}\leq C\tau^{2H_{2}+(H_{1}-1)\alpha-\epsilon}.
	\end{equation*}
	We first consider the estimate of $\|\mathcal{R}_{h}(t_{n}-s)-\bar{\mathcal{R}}_{h}(t_{n}-s)\|$. According to the definitions of $\mathcal{R}_{h}(t_{n}-s)$ and $\bar{\mathcal{R}}_{h}(t_{n}-s)$, there holds
	\begin{equation*}
		\begin{aligned}
			&\|\mathcal{R}_{h}(t_{n}-s)-\bar{\mathcal{R}}_{h}(t_{n}-s)\|\\
			\leq&C\left \|\int_{\Gamma_{\theta,\kappa}}e^{z(t_{n}-s)}z^{\alpha-1}(z^{\alpha}+A_{h})^{-1}dz\right .\\
			&\left .-\int_{\Gamma_{\theta,\kappa}^{\tau}}e^{z(t_{n}-s)}(\delta_{\tau}(e^{-z\tau}))^{\alpha-1}((\delta_{\tau}(e^{-z\tau}))^{\alpha}+A_{h})^{-1}\frac{z}{e^{z\tau}-1}dz\right \|\\
			\leq&C\left \|\int_{\Gamma_{\theta,\kappa}\backslash\Gamma_{\theta,\kappa}^{\tau}}e^{z(t_{n}-s)}z^{\alpha-1}(z^{\alpha}+A_{h})^{-1}dz\right \|\\
			&+C\bigg \|\int_{\Gamma_{\theta,\kappa}^{\tau}}e^{z(t_{n}-s)}\\
			&\qquad \cdot\left (z^{\alpha-1}(z^{\alpha}+A_{h})^{-1}-(\delta_{\tau}(e^{-z\tau}))^{\alpha-1}((\delta_{\tau}(e^{-z\tau}))^{\alpha}+A_{h})^{-1}\frac{z}{e^{z\tau}-1}\right )dz\bigg \|.\\
		\end{aligned}
	\end{equation*}
	By the fact $|z-\delta_{\tau}(e^{-z\tau})|\leq C\tau|z|^{2}$ \cite{Gunzburger.2018ScrotdfstPstaswn}, there is
	\begin{equation*}
		\left \|z^{\alpha-1}(z^{\alpha}+A_{h})^{-1}-(\delta_{\tau}(e^{-z\tau}))^{\alpha-1}((\delta_{\tau}(e^{-z\tau}))^{\alpha}+A_{h})^{-1}\frac{z}{e^{z\tau}-1}\right \|\leq C\tau,
	\end{equation*}
	which yields
	\begin{equation*}
		\begin{aligned}
			&\|\mathcal{R}_{h}(t_{n}-s)-\bar{\mathcal{R}}_{h}(t_{n}-s)\|\\ &\qquad\qquad \leq C\left(\tau^{1-\epsilon}\int_{\Gamma_{\theta,\kappa}\backslash\Gamma_{\theta,\kappa}^{\tau}}|e^{z(t_{n}-s)}||z|^{\epsilon}|dz|+\tau\int_{\Gamma_{\theta,\kappa}}|e^{z(t_{n}-s)}||dz|\right ).
		\end{aligned}
	\end{equation*}
	Thus combining the assumptions \eqref{eqnonassump} and Theorem \ref{thmsobobaru} and using Cauchy-Schwarz inequality, one can obtain
	\begin{equation*}
		\begin{aligned}
			\vartheta_{1,2}\leq &C\tau^{2-2\epsilon}\int_{0}^{t_{n}}(t_{n}-s)^{1-\epsilon}\left (\int_{\Gamma_{\theta,\kappa}\backslash\Gamma_{\theta,\kappa}^{\tau}}|e^{z(t_{n}-s)}||z|^{-\epsilon}|dz|\right )^{2}\mathbb{E}\|u(s)\|_{L^2(D)}^{2}ds\\
			&+C\tau^{2}\int_{0}^{t_{n}}(t_{n}-s)^{1-\epsilon}\left (\int_{\Gamma_{\theta,\kappa}^{\tau}}|e^{z(t_{n}-s)}||dz|\right )^{2}\mathbb{E}\|u(s)\|_{L^2(D)}^{2}ds\\
			\leq &C\tau^{2-2\epsilon}\int_{0}^{t_{n}}(t_{n}-s)^{\epsilon-1}ds\\
			&+C\tau^{2}\int_{0}^{t_{n}}(t_{n}-s)^{1-\epsilon}\int_{\Gamma_{\theta,\kappa}^{\tau}}|e^{2z(t_{n}-s)}||z|^{1-2\epsilon}|dz|\int_{\Gamma_{\theta,\kappa}^{\tau}}|z|^{2\epsilon-1}|dz|ds\\
			\leq &C\tau^{2-2\epsilon}.
		\end{aligned}
	\end{equation*}
	As for $\vartheta_{1,3}$, we arrive at
	\begin{equation*}
		\begin{aligned}
			\vartheta_{1,3}\leq C\tau\sum_{i=1}^{n-1}\|\bar{u}_{h}(t_{i})-u_{h}^{i}\|_{L^2(D)}^{2}.
		\end{aligned}
	\end{equation*}
	The fact $\|A_{h}^{s/2}P_{h}A^{-s/2}\|\leq C$ for $s\in[0,1]$ \cite{Thomee.2006GFEMfPP} and Theorem \ref{thmisometry} lead to
	\begin{equation*}
		\begin{aligned}
			\vartheta_{2}\leq&C\sum_{k=1}^{\infty}\int_{0}^{t_{n}}\left \|{}_{0}\partial^{\frac{1-2H_{2}}{2}}_{t}(\mathcal{R}_{h}(s)-\bar{\mathcal{R}}_{h}(s))P_{h}\phi_{k}(y)\right \|^{2}_{L^2(D)}\|\phi_{k}\|_{H^{\frac{1-2H_{1}}{2}}_{0}(D)}^{2}ds\\
			\leq&C\sum_{k=1}^{\infty}\lambda_{k}^{-\frac{1}{2}-\epsilon}\int_{0}^{t_{n}}\left \|\lambda_{k}^{\frac{1-H_{1}}{2}+\frac{\epsilon}{2}}{}_{0}\partial^{\frac{1-2H_{2}}{2}}_{t}(\mathcal{R}_{h}(s)-\bar{\mathcal{R}}_{h}(s))P_{h}\phi_{k}\right \|^{2}_{L^2(D)}ds\\
			\leq&C\sum_{k=1}^{\infty}\lambda_{k}^{-\frac{1}{2}-\epsilon}\int_{0}^{t_{n}}\left \|{}_{0}\partial^{\frac{1-2H_{2}}{2}}_{t}(\mathcal{R}_{h}(s)-\bar{\mathcal{R}}_{h}(s))P_{h}A^{\frac{1-H_{1}}{2}+\frac{\epsilon}{2}}\phi_{k}\right \|^{2}_{L^2(D)}ds\\
			\leq&C\int_{0}^{t_{n}}\left \|{}_{0}\partial^{\frac{1-2H_{2}}{2}}_{t}(\mathcal{R}_{h}(s)-\bar{\mathcal{R}}_{h}(s))A^{\frac{1-H_{1}}{2}+\frac{\epsilon}{2}}_{h}\right \|^{2}ds.
		\end{aligned}
	\end{equation*}
	Simple calculations imply
	\begin{equation*}
		\begin{aligned}
			&\left \|{}_{0}\partial^{\frac{1-2H_{2}}{2}}_{t}(\mathcal{R}_{h}(s)-\bar{\mathcal{R}}_{h}(s))A^{\frac{1-H_{1}}{2}+\frac{\epsilon}{2}}_{h}\right \|\\
			\leq&C\left\|\int_{\Gamma_{\theta,\kappa}\backslash\Gamma_{\theta,\kappa}^{\tau}}e^{zs}z^{\frac{1-2H_{2}}{2}}z^{\alpha-1}(z^{\alpha}+A_{h})^{-1}A_{h}^{\frac{1-H_{1}}{2}+\frac{\epsilon}{2}}dz\right\|\\
			&+C\left\|\int_{\Gamma_{\theta,\kappa}^{\tau}}e^{zs}z^{\frac{1-2H_{2}}{2}}(z^{\alpha-1}(z^{\alpha}+A_{h})^{-1}\right .\\
			&\left .\qquad-(\delta_{\tau}(e^{-z\tau}))^{\alpha-1}((\delta_{\tau}(e^{-z\tau}))^{\alpha}+A_{h})^{-1})A_{h}^{\frac{1-H_{1}}{2}+\frac{\epsilon}{2}}dz\right\|\\
			\leq&C\int_{\Gamma_{\theta,\kappa}\backslash\Gamma_{\theta,\kappa}^{\tau}}|e^{zs}||z|^{\frac{1-2H_{2}}{2}+(\frac{1-H_{1}}{2}+\frac{\epsilon}{2})\alpha-1}|dz|\\
			&+C\tau\int_{\Gamma_{\theta,\kappa}^{\tau}}|e^{zs}||z|^{\frac{1-2H_{2}}{2}+(\frac{1-H_{1}}{2}+\frac{\epsilon}{2})\alpha}|dz|,
		\end{aligned}
	\end{equation*}
	which yields
	\begin{equation*}
		\begin{aligned}
			\vartheta_{2}\leq&C\int_{0}^{t_{n}}\left(\int_{\Gamma_{\theta,\kappa}\backslash\Gamma_{\theta,\kappa}^{\tau}}|e^{zs}||z|^{\frac{1-2H_{2}}{2}+(\frac{1-H_{1}}{2}+\frac{\epsilon}{2})\alpha-1}|dz|\right )^{2}ds\\
			&+C\tau^{2}\int_{0}^{t_{n}}\left(\int_{\Gamma_{\theta,\kappa}^{\tau}}|e^{zs}||z|^{\frac{1-2H_{2}}{2}+(\frac{1-H_{1}}{2}+\frac{\epsilon}{2})\alpha}|dz|\right )^{2}ds\\
			\leq&C\int_{0}^{t_{n}}\int_{\Gamma_{\theta,\kappa}\backslash\Gamma_{\theta,\kappa}^{\tau}}|e^{2zs}||z|^{-\epsilon}|dz|\int_{\Gamma_{\theta,\kappa}\backslash\Gamma_{\theta,\kappa}^{\tau}}|z|^{1-2H_{2}+(1-H_{1}+\epsilon)\alpha-2+\epsilon}ds\\
			&+C\tau^{2}\int_{0}^{t_{n}}\int_{\Gamma_{\theta,\kappa}^{\tau}}|e^{2zs}|z|^{-\epsilon}|dz|\int_{\Gamma_{\theta,\kappa}^{\tau}}|z|^{1-2H_{2}+(1-H_{1}+\frac{\epsilon}{2})\alpha+\epsilon}|dz|ds\\
			\leq& C\tau^{2H_{2}+(H_{1}-1)\alpha-\epsilon_{0}}.
		\end{aligned}
	\end{equation*}
	As for $\uppercase\expandafter{\romannumeral3}$, we have
	\begin{equation*}
		\begin{aligned}
			\vartheta_{3}\leq&C\mathbb{E}\Bigg \|\sum_{k=1}^{\infty}\int_{0}^{t_{n}}\int_{D}\left (\bar{\mathcal{R}}_{h}(t_{n}-s)-\frac{1}{\tau}\sum_{i=1}^{n}\chi_{(t_{i-1},t_{i}]}(s)\int_{t_{i-1}}^{t_{i}}\bar{\mathcal{R}}_{h}(t_{n}-r)dr\right )\\
			&\qquad\qquad\qquad\cdot P_{h}\phi_{k}(x)\phi_{k}(y)\xi^{H_{1},H_{2}}(dy,ds))\Bigg \|_{L^{2}(D)}^{2}\\
			\leq&C\sum_{k=1}^{\infty}\int_{0}^{t_{n}}\left \|{}_{0}\partial^{\frac{1-2H_{2}}{2}}_{s}\left (\bar{\mathcal{R}}_{h}(s)-\frac{1}{\tau}\sum_{i=1}^{n}\chi_{[t_{i-1},t_{i})}(s)\int_{t_{i-1}}^{t_{i}}\bar{\mathcal{R}}_{h}(r)dr\right )P_{h}\phi_{k}(x)\right \|_{L^2(D)}^{2}\\
			&\qquad\qquad\qquad\cdot\|\phi_{k}(y)\|_{H^{\frac{1-2H_{1}}{2}}(D)}^{2}ds\\
			\leq&C\sum_{k=1}^{\infty}\lambda_{k}^{-\frac{1}{2}-2\epsilon}\\
			&\cdot\int_{0}^{t_{n}}\left \|{}_{0}\partial^{\frac{1-2H_{2}}{2}}_{s}\left (\bar{\mathcal{R}}_{h}(s)-\frac{1}{\tau}\sum_{i=1}^{n}\chi_{[t_{i-1},t_{i})}(s)\int_{t_{i-1}}^{t_{i}}\bar{\mathcal{R}}_{h}(r)dr\right )P_{h}\lambda_{k}^{\frac{1-H_{1}}{2}+\epsilon}\phi_{k}(x)\right \|_{L^2(D)}^{2}ds\\
			\leq&C\int_{0}^{t_{n}}\left \|{}_{0}\partial^{\frac{1-2H_{2}}{2}}_{s}\left (\bar{\mathcal{R}}_{h}(s)-\frac{1}{\tau}\sum_{i=1}^{n}\chi_{[t_{i-1},t_{i})}(s)\int_{t_{i-1}}^{t_{i}}\bar{\mathcal{R}}_{h}(r)dr\right )A_{h}^{\frac{1-H_{1}}{2}+\epsilon}\right \|_{X_{h}\rightarrow X_{h}}^{2}ds.
		\end{aligned}
	\end{equation*}
	Introduce $v_{h}=\sum_{k=1}^{N}a_{k}\phi_{k,h}$ with $\|v_{h}\|_{X_{h}}^{2}=(v_{h},v_{h})=\sum_{k=1}^{N}a_{k}^{2}$. Thus
	\begin{equation*}
		\begin{aligned}
			&\left \|{}_{0}\partial^{\frac{1-2H_{2}}{2}}_{s}\left (\bar{\mathcal{R}}_{h}(s)-\frac{1}{\tau}\sum_{i=1}^{n}\chi_{[t_{i-1},t_{i})}(s)\int_{t_{i-1}}^{t_{i}}\bar{\mathcal{R}}_{h}(r)dr\right )A_{h}^{\frac{1-H_{1}}{2}+\epsilon}\right \|_{X_{h}\rightarrow X_{h}}^{2}\\
			\leq &\sup_{v_{h}\in X_{h},\|v_{h}\|_{X_{h}}=1}\left \|{}_{0}\partial^{\frac{1-2H_{2}}{2}}_{s}\left (\bar{\mathcal{R}}_{h}(s)-\frac{1}{\tau}\sum_{i=1}^{n}\chi_{[t_{i-1},t_{i})}(s)\int_{t_{i-1}}^{t_{i}}\bar{\mathcal{R}}_{h}(r)dr\right )A_{h}^{\frac{1-H_{1}}{2}+\epsilon}v_{h}\right \|_{X_{h}}^{2}\\
			\leq &C\sup_{v_{h}\in X_{h},\|v_{h}\|_{X_{h}}=1}\sum_{k=1}^{N}a_{k}^{2}\left |{}_{0}\partial^{\frac{1-2H_{2}}{2}}_{s}\left (E_{k,h}(s)-\frac{1}{\tau}\sum_{i=1}^{n}\chi_{[t_{i-1},t_{i})}(s)\int_{t_{i-1}}^{t_{i}}E_{k,h}(r)dr\right )\lambda_{k,h}^{\frac{1-H_{1}}{2}+\epsilon}\right |^{2}.
		\end{aligned}
	\end{equation*}
	Combining Lemma \ref{lemEkhest} and the similar arguments of Theorem \ref{thmreglarerr}, we obtain
	\begin{equation*}
		\vartheta_{3}\leq C\tau^{2H_{2}+(H_{1}-1)\alpha-\epsilon}.
	\end{equation*}
	After gathering the above estimates and using the discrete Gr\"onwall inequality \cite{Thomee.2006GFEMfPP}, the desired result can be reached.
\end{proof}
\section{Numerical experiments}\label{sec5}
Here we present some numerical examples to show the effectiveness of the numerical methods and confirm the theoretical results. We take $m$ trajectories $\{\omega_{j}\}_{j=1}^{m}$ to calculate the solution. Since the exact solutions are unknown in the numerical experiments, we measure the convergence rates by
\begin{equation*}
	{\rm Rate}=\frac{\ln(e_{h}/e_{h/2})}{\ln(2)},\quad {\rm Rate}=\frac{\ln(e_{\tau}/e_{\tau/2})}{\ln(2)},
\end{equation*}
where
\begin{equation*}
	\begin{aligned}
		&e_{h}=\left (\frac{1}{m}\sum_{i=1}^{m}\|u^{M}_{h}(\omega_{i})-u^{M}_{h/2}(\omega_{i})\|^{2}_{L^{2}(D)}\right )^{1/2},\\
		&e_{\tau}=\left (\frac{1}{m}\sum_{i=1}^{m}\|u_{\tau}(\omega_{i})-u_{\tau/2}(\omega_{i})\|^{2}_{L^{2}(D)}\right )^{1/2},
	\end{aligned}
\end{equation*}
with $u^{M}_{h}$ and $u_{\tau}$ being the solutions at time $T=t_{M}$ with mesh size $h$ and time step size $\tau$, respectively.

\begin{example}
	In this example, to show the temporal convergence, we take $m=200$, $\beta=1$, $T=0.5$, $l=0.5$, $f(u)=\sin(u)$, and $h=l/512$ to solve \eqref{eqretosol}. The numerical results with different $\alpha$, $H_{1}$, and $H_{2}$ are given in Table \ref{tab:tem}, where
	the numbers in the bracket in the last column denote the theoretical rates predicted by Theorem \ref{thm:tem}. As it can be seen, there is a good agreement between the numerical convergence rates and the predicted ones.
	\begin{table}[htbp]
		\centering
		\caption{Temporal errors and convergence rates.}
		\begin{tabular}{c|ccccl}
			\hline\noalign{\smallskip}
			$(\alpha,H_1,H_2)\backslash T/\tau$ & 16    & 32    & 64    & 128   & Rate \\
			\noalign{\smallskip}\hline\noalign{\smallskip}
			(0.3,0.2,0.2) & 4.002E-05 & 4.191E-05 & 4.155E-05 & 3.478E-05 & $\approx0.0675(=0.08)$ \\
			(0.3,0.3,0.5) & 1.012E-05 & 8.111E-06 & 5.976E-06 & 4.188E-06 & $\approx0.4243(=0.395)$ \\
			(0.5,0.2,0.3) & 6.808E-05 & 5.872E-05 & 5.805E-05 & 5.069E-05 & $\approx0.1419(=0.1)$ \\
			(0.5,0.4,0.3) & 5.808E-05 & 5.563E-05 & 4.244E-05 & 4.497E-05 & $\approx0.1230(=0.15)$ \\
			(0.7,0.4,0.5) & 3.886E-05 & 3.354E-05 & 2.822E-05 & 2.126E-05 & $\approx0.2901(=0.29)$ \\
			(0.7,0.5,0.2) & 1.339E-04 & 1.351E-04 & 1.482E-04 & 1.268E-04 & $\approx0.0264(=0.025) $\\
			\noalign{\smallskip}\hline
		\end{tabular}%
		\label{tab:tem}%
	\end{table}%
\end{example}
\begin{example}
	To show the spatial convergence rates, we consider the numerical solution of \eqref{eqretosol} with $\beta=10$ and $f(u)=\frac{1}{50}\sin(u)$.
	Here, we take $m=100$, $T=0.01$, $l=0.1$, and $\tau=T/2048$. We show the corresponding errors and convergence rates with different $\alpha$, $H_{1}$, and $H_{2}$ in Table \ref{tab:spa}. All the numerical convergence rates well agree with the predicted ones stated in Theorem \ref{thm:spa}.
	\begin{table}[htbp]
		\centering
		\caption{Spatial errors and convergence rates.}
		\begin{tabular}{c|ccccl}
			\hline
			\noalign{\smallskip}
			$(\alpha,H_1,H_2)\backslash l/h$ & 8     & 16    & 32    & 64    &  Rate\\
			\noalign{\smallskip}\hline\noalign{\smallskip}
			(0.3,0.2,0.5) & 1.385E-01 & 7.186E-02 & 5.079E-02 & 3.173E-02 & $\approx0.7087(=0.7)$ \\
			(0.3,0.5,0.5) & 3.188E-02 & 1.554E-02 & 7.991E-03 & 4.002E-03 & $\approx0.9979(=1)$ \\
			(0.5,0.2,0.3) & 9.038E-01 & 6.727E-01 & 4.861E-01 & 3.072E-01 & $\approx0.5190(=0.4)$ \\
			(0.5,0.5,0.4) & 1.533E-01 & 7.697E-02 & 4.082E-02 & 2.042E-02 & $\approx0.9695(=1)$ \\
			(0.7,0.4,0.4) & 4.977E-01 & 3.482E-01 & 2.396E-01 & 1.468E-01 & $\approx0.5871(=0.5429)$ \\
			(0.7,0.3,0.4) & 7.310E-01 & 5.083E-01 & 3.810E-01 & 2.546E-01 & $\approx0.5073(=0.4429)$ \\
			\noalign{\smallskip}\hline
		\end{tabular}%
		\label{tab:spa}%
	\end{table}%
\end{example}
\section{Conclusions}\label{sec6}

Anomalous diffusions are ubiquitous in the nature world. The Brownian motion subordinated by inverse $\alpha$-stable L\'evy process can effectively model the subdiffusion. In this paper, we introduce its Fokker-Planck equation with nonlinear source term and external fractional noise, and put all our efforts on the numerical methods of the equation. That is, we approximate the stochastic nonlinear fractional diffusion equation driven by the fractional Brownian sheet noise with Hurst parameters $H_{1},H_{2}\in(0,\frac{1}{2}]$. After providing the regularity of the solution and regularizing the rough noise by Wong-Zakai approximation, we build the fully discrete scheme by backward Euler convolution quadrature and finite element methods. Moreover, the complete error analyses are also developed. Finally, the numerical experiments validate the effectiveness of the designed algorithm.

%


%
%

\bibliographystyle{spmpsci}
\bibliography{ref2}

\end{document}